\DeclareSymbolFont{cmletters}{OML}{cmm}{m}{it}              
\DeclareSymbolFont{cmsymbols}{OMS}{cmsy}{m}{n}
\DeclareSymbolFont{cmlargesymbols}{OMX}{cmex}{m}{n}
\DeclareMathSymbol{\myjmath}{\mathord}{cmletters}{"7C}     \let\jmath\myjmath 
\DeclareMathSymbol{\myamalg}{\mathbin}{cmsymbols}{"71}     \let\amalg\myamalg
\DeclareMathSymbol{\mycoprod}{\mathop}{cmlargesymbols}{"60}
\DeclareMathSymbol{\myalpha}{\mathord}{cmletters}{"0B}     \let\alpha\myalpha 
\DeclareMathSymbol{\mybeta}{\mathord}{cmletters}{"0C}      \let\beta\mybeta
\DeclareMathSymbol{\mygamma}{\mathord}{cmletters}{"0D}     \let\gamma\mygamma
\DeclareMathSymbol{\mydelta}{\mathord}{cmletters}{"0E}     \let\delta\mydelta
\DeclareMathSymbol{\myepsilon}{\mathord}{cmletters}{"0F}   \let\epsilon\myepsilon
\DeclareMathSymbol{\myzeta}{\mathord}{cmletters}{"10}      \let\zeta\myzeta
\DeclareMathSymbol{\myeta}{\mathord}{cmletters}{"11}       \let\eta\myeta
\DeclareMathSymbol{\mytheta}{\mathord}{cmletters}{"12}     \let\theta\mytheta
\DeclareMathSymbol{\myiota}{\mathord}{cmletters}{"13}      \let\iota\myiota
\DeclareMathSymbol{\mykappa}{\mathord}{cmletters}{"14}     \let\kappa\mykappa
\DeclareMathSymbol{\mylambda}{\mathord}{cmletters}{"15}    \let\lambda\mylambda
\DeclareMathSymbol{\mymu}{\mathord}{cmletters}{"16}        \let\mu\mymu
\DeclareMathSymbol{\mynu}{\mathord}{cmletters}{"17}        \let\nu\mynu
\DeclareMathSymbol{\myxi}{\mathord}{cmletters}{"18}        \let\xi\myxi
\DeclareMathSymbol{\mypi}{\mathord}{cmletters}{"19}        \let\pi\mypi
\DeclareMathSymbol{\myrho}{\mathord}{cmletters}{"1A}       \let\rho\myrho
\DeclareMathSymbol{\mysigma}{\mathord}{cmletters}{"1B}     \let\sigma\mysigma
\DeclareMathSymbol{\mytau}{\mathord}{cmletters}{"1C}       \let\tau\mytau
\DeclareMathSymbol{\myupsilon}{\mathord}{cmletters}{"1D}   \let\upsilon\myupsilon
\DeclareMathSymbol{\myphi}{\mathord}{cmletters}{"1E}       \let\phi\myphi
\DeclareMathSymbol{\mychi}{\mathord}{cmletters}{"1F}       \let\chi\mychi
\DeclareMathSymbol{\mypsi}{\mathord}{cmletters}{"20}       \let\psi\mypsi
\DeclareMathSymbol{\myomega}{\mathord}{cmletters}{"21}     \let\omega\myomega
\DeclareMathSymbol{\myvarepsilon}{\mathord}{cmletters}{"22}\let\varepsilon\myvarepsilon
\DeclareMathSymbol{\myvartheta}{\mathord}{cmletters}{"23}  \let\vartheta\myvartheta
\DeclareMathSymbol{\myvarpi}{\mathord}{cmletters}{"24}     \let\varpi\myvarpi
\DeclareMathSymbol{\myvarrho}{\mathord}{cmletters}{"25}    \let\varrho\myvarrho
\DeclareMathSymbol{\myvarsigma}{\mathord}{cmletters}{"26}  \let\varsigma\myvarsigma
\DeclareMathSymbol{\myvarphi}{\mathord}{cmletters}{"27}    \let\varphi\myvarphi
\theoremstyle{plain}
\newtheorem{thm}{Theorem}[section]
\newtheorem{cor}[thm]{Corollary}
\newtheorem{lemma}[thm]{Lemma}
\newtheorem{prop}[thm]{Proposition}
\newtheorem*{thm*}{Theorem}
\newtheorem*{thmA}{Theorem A}
\theoremstyle{definition}
\newtheorem{df}[thm]{Definition}
\newtheorem*{question*}{Question}
\newtheorem{rem}[thm]{Remark}
\newtheorem{ex}[thm]{Example}
\DeclareMathOperator{\Spec}{Spec}
\DeclareMathOperator{\res}{res}
\DeclareMathOperator{\Hom}{Hom}
\DeclareMathOperator{\sign}{sign}
\DeclareMathOperator{\colim}{colim}
\DeclareMathOperator{\im}{im}
\DeclareMathOperator{\Mod}{{Mod}}
\def\0{{\bf 0}}
\def\A{{\mathbb A}}
\def\F{{\mathbb F}}
\def\P{{\mathbb P}}
\def\Z{{\mathbb Z}}
\def\cB{{\mathcal B}}
\def\cC{{\mathcal C}}
\def\cD{{\mathcal D}}
\def\cF{{\mathcal F}}
\def\cI{{\mathcal I}}
\def\cJ{{\mathcal J}}
\def\cL{{\mathcal L}}
\def\cO{{\mathcal O}}
\def\cR{{\mathcal R}}
\def\cT{{\mathcal T}}
\def\cU{{\mathcal U}}
\def\cV{{\mathcal V}}
\def\cX{{\mathcal X}}
\def\cZ{{\mathcal Z}}
\def\Fun{{\F_1}}
\def\Funsq{{\F_{1^2}}}
\def\int{\textup{int}}
\def\id{\textup{id}}
\def\1{\textbf{1}}
\def\rk{{\textup{rk}}}
\def\={\equiv}
\def\n={\equiv\hspace{-10,5pt}/\hspace{3,5pt}}
\def\bl{{\textup{bl}}}
\def\hom{{\textup{hom}}}
\def\tot{\textup{tot}}
\def\alt{\textup{alt}}
\def\hom{\textup{hom}}
\newcommand{\arincl}[1]{\ar@{ >->}@<-0,0ex>#1} 
\newcommand{\gen}[1]{\langle #1 \rangle}
\newcommand{\bpquot}[2]{#1\!\sslash\!#2}
\newcommand{\bpgenquot}[2]{#1\!\sslash\!\gen{#2}}
\title{\v Cech cohomology over $\Funsq$}
\author{Jaret Flores}
\email{jarflores@gmail.com}
\address{}
\author{Oliver Lorscheid}
\email{oliver@impa.br}
\address{Instituto Nacional de Matem\'atica Pura e Aplicada, Estrada Dona Castorina 110, Rio de Janeiro, Brazil}
\author{Matt Szczesny}
\email{szczesny@math.bu.edu}
\address{Boston University, Dept. of Mathematics and Statistics, 111 Cummington Mall, Boston, MA 02215}
\begin{document}


\begin{abstract}
 In this text, we generalize Cech cohomology to sheaves $\mathcal F$ with values in blue $B$-modules where $B$ is a blueprint with $-1$. If $X$ is an object of the underlying site, then the cohomology sets $H^l(X,\mathcal F)$ turn out to be blue $B$-modules. For locally free $\mathcal O_X$-module $\mathcal F$ on a monoidal scheme $X$, we prove that $H^l(X,\mathcal F)^+=H^l(X^+,\mathcal F^+)$ where $X^+$ is the scheme associated with $X$ and $\mathcal F^+$ is the locally free $\mathcal O_{X^+}$-module associated with $\mathcal F$. 
 
 In an appendix, we show that the naive generalization of cohomology as a right derived functor is infinite-dimensional for the projective line over $\mathbb F_1$.
\end{abstract}

\maketitle

\section*{Introduction}
\label{section: introduction}

\noindent
 While many standard methods in algebraic geometry carry over readily to $\Fun$-geometry, other methods withstand a straightforward generalization since essential properties from usual algebraic geometry fail to be true or produce unusual results. 
 
 Sheaf cohomology with values in categories over $\Fun$ belongs to the latter class of theories. Though methods from homological algebra generalize without great difficulties to injective resolutions of sheaves on $\Fun$-schemes (see \cite{Deitmar11b}), the derived cohomology sets are larger than one would expect. For instance, the first cohomology set $H^1(X,\cO_{X})$ of the projective line $X=\P^1_\Fun$ over $\Fun$ is of infinite rank over $\Fun$, cf.\ Appendix \ref{appendix: cohomology of p1}.

 There have been some ad hoc observations for the projective line $\P^1_\Fun$ in \cite{Connes-Consani10a}, for which \v Cech cohomology works well as long as the chosen covering consists of at most two open sets. For larger coverings, however, it is not clear how to make sense of the alternating sums in the definition of \v Cech cohomology.\footnote{During the time of writing, Jaiung Jun has published his preprint \cite{Jun15} on \v Cech cohomology for semirings. His method of double complexes might be applicaple to the setting of this paper.}
 
 This problem resolves naturally for sheaves over $\Funsq$, since $\Funsq$ contains an additive inverse $-1$ of $1$, i.e.\ it bears a relation $1+(-1)=0$. This leads naturally to the theory of blueprints, which deals with multiplicative monoids that come together with certain additive relations that might be weaker than an addition.
  
 The aim of this paper is to define \v Cech cohomology for sheaves with values in blue $B$-modules where $B$ is a blueprint with $-1$, and to show that this leads to a meaningful theory.
 
 We calculate the cohomology of a monoidal scheme $X$ in terms of a comparison with the cohomology of their associated scheme $X^+$, which is also denoted by ``$X\otimes_\Fun\Z$'' in the literature. For this comparison, we assume the following mild technical assumption on an open covering $\{U_i\}_{i\in I}$ of $X$.
 
 \medskip\noindent\textbf{Hypothesis (H):} For all finite subsets $J\subset I$ of $\cI$, the restriction map
             \[
              \res_{U_J,U_I}:\cO_X(U_J)\to\cO_X(U_I)
             \]
             is injective.
 
 \medskip
 
 The following is Theorem \ref{thm: comparison of cech cohomology} of the main text.
 
\begin{thmA}
 Given a monoidal scheme $X$ over $B$ that admits a finite covering $\{U_i\}$ with Hypothesis (H) such that $\cO_X(U_i)$ are monoid blueprints over $B$. Then we have for every locally free sheaf $\cF$ on $X$ that
 \[
  H^l(X,\cF)^+ \ = \ H^l(X^+,\cF^+).
 \]
\end{thmA}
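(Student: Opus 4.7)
The strategy is to compare the Čech cochain complexes on the two sides term by term and then to verify that $(-)^+$ commutes with the cohomology of this complex.

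I would first identify the terms. For any finite intersection $U_J := U_{i_0}\cap\cdots\cap U_{i_l}$ appearing in the Čech complex, local freeness of $\cF$ together with the assumption that $\cO_X(U_i)$ is a monoid blueprint gives a concrete description $\cF(U_J) \cong \cO_X(U_J)^{\oplus r}$ for some finite $r$. Combining this with the defining identity $\cO_{X^+}(U_J^+) = \cO_X(U_J)^+$ and the compatibility of $(-)^+$ with finite free modules yields a natural isomorphism $\cF^+(U_J^+) \cong \cF(U_J)^+$. Taking the (finite) product over subsets $J\subset I$ of size $l+1$ and observing that the Čech differentials commute with $(-)^+$ (the alternating sums are well-defined on the blueprint side because $-1\in B$), one obtains a natural termwise isomorphism of cochain complexes
$$\bigl(\check C^\bullet(\{U_i\},\cF)\bigr)^+ \;\stackrel{\sim}{\longrightarrow}\; \check C^\bullet(\{U_i^+\},\cF^+).$$
Hypothesis (H) enters here, ensuring that the restriction maps $\cO_X(U_J)\to\cO_X(U_{J'})$ for $J\subset J'$ are injective so that the Čech complex on the blueprint side is well-behaved.

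The heart of the argument is then to check that $(-)^+$ commutes with the cohomology of this Čech complex. Since $(-)^+$ is a left adjoint (to the forgetful functor from $B^+$-modules to blue $B$-modules), it preserves cokernels and hence quotients; thus $H^l(X,\cF)^+ = (\ker d^l)^+/(\im d^{l-1})^+$, and it remains to prove $(\ker d^l)^+ = \ker\bigl((d^l)^+\bigr)$. This is the main obstacle, since $(-)^+$ is not left exact in general. The plan is to exploit the explicit structure: each $\check C^l$ is a finite direct product of free modules over the monoid blueprints $\cO_X(U_J)$, so in chosen bases the differential $d^l$ is encoded by a matrix of signed monomials. The kernel condition becomes a system of explicit blueprint relations in the coordinates, and precisely these relations translate, upon applying $(-)^+$, into the defining equations of $\ker\bigl((d^l)^+\bigr)$. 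Finally, the covering $\{U_i^+\}$ of $X^+$ being affine, Čech cohomology for this covering computes the derived-functor sheaf cohomology of the quasi-coherent sheaf $\cF^+$, giving the right-hand side of the theorem.
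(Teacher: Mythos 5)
Your reduction of the problem is sound as far as it goes: the termwise identification $\bigl(\cC^\bullet(X,\cF;\cU)\bigr)^+\cong\cC^\bullet(X^+,\cF^+;\cU^+)$ is correct (bearing in mind that on the blueprint side $\cF(U_J)$ is a wedge $\bigvee_{b\in\beta}\cO_X(U_J)\cdot b$, which only becomes a direct sum after applying $(-)^+$), the functor $(-)^+$ does commute with quotients, and the whole theorem indeed hinges on the statement that $(-)^+$ commutes with kernels of the differentials, i.e.\ $\cZ^l(X,\cF;\cU)^+=\cZ^l(X^+,\cF^+;\cU^+)$, together with the analogous statement for coboundaries. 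But the step you offer for this --- that the kernel condition is ``a system of explicit blueprint relations'' which ``precisely \dots translate, upon applying $(-)^+$, into the defining equations of $\ker((d^l)^+)$'' --- is an assertion of the conclusion, not an argument. An element of $\ker((d^l)^+)$ is a formal $B^+$-linear combination $\sum_i x_i$ of elements $x_i\in\cC^l$ with $\sum_i d^l(x_i)=0$, and this vanishing can occur by cancellation \emph{across} the different summands $x_i$; nothing in the ``matrix of signed monomials'' picture forces such an element to be a $B^+$-combination of elements of $\cC^l$ that are individually cocycles in the blueprint sense. Since $(-)^+$ is a left adjoint it has no reason to preserve kernels, and this is exactly where a naive termwise comparison breaks down.

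The paper closes this gap by a mechanism you do not have, and it is also where Hypothesis (H) really earns its keep (not merely to make the complex ``well-behaved''). One forms $B_\eta=\colim B_I$ over all finite $I\subset\cI$; by (H) each $B_I$ injects into $B_\eta$, so the whole \v Cech complex embeds into the \v Cech complex of the affine scheme $X_\eta^+=\Spec B_\eta^+$ with respect to the trivial covering. There every higher cocycle is a coboundary (vanishing of higher cohomology of quasi-coherent sheaves on affine schemes), and coboundaries are controlled by an explicit generator argument: $\cB^l$ on the ring side is generated by the differentials of one-component vectors $x_{a,b,J}$, whose images have a single signed term in each slot and hence already lie in the blueprint-level module. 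Intersecting back with $\cC^l(X,\cF;\cU)$ then yields the cocycle statement. You would need to reproduce this reduction to the affine generic situation, or supply a genuinely new argument for the kernel identity; as written the central step is missing. Two smaller omissions: the identification $\cB^l(X,\cF;\cU)^+=\cB^l(X^+,\cF^+;\cU^+)$ also requires the generator argument rather than just ``preserves cokernels'', and the passage from a fixed covering to $H^l(X,\cF)=\colim H^l(X,\cF;\cV)$ needs the cofinality of coverings satisfying the hypotheses together with the fact that $(-)^+$ commutes with filtered colimits.
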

 
 Note that the class of monoidal schemes with a covering satisfying Hypothesis (H) contains, in particular, a model for every toric variety. Therefore the results of this paper might be helpful for calculations of sheaf cohomology for toric varieties, cf.\ Remark \ref{rem: cohomology for toric varieties}.

 In the first part of this paper, we define \v Cech cohomology for sheaves of blue $B$-modules on an arbitrary site. We choose this general formulation because it is applicable to arithmetic questions like the \'etale cohomology of the compactification $\overline{\Spec \Z}$ of the arithmetic line; see \cite{L14} for a model of $\overline{\Spec \Z}$ and some ideas towards such a theory.
 
 In the second part of this paper, we introduce the notion of monoidal schemes over a blueprint $B$, which extends the notion of monoidal schemes from $\Fun$ to any blueprint, and we discuss the notion of locally free sheaves. In a final section, we formulate and prove our main result Theorem A.
 
 Since there are several introduction to blueprints and blue schemes, we do not provide another one in this text, but provide the reader with a reference where this is necessary. As a general reference, we suggest the overview paper \cite{L13}. In particular, the reader will find the definition of a blueprint in section II.1.1, and the definition of a blue $B$-module in section II.6.1 of this paper.

\part{\v Cech cohomology over $\Funsq$}
\label{part: cech cohomology}

\section{Definition for a fixed covering}
\label{section: definition for a fixed covering}

\noindent
In this part of the paper, we consider a site $\cT$ and an object $X$ of this site. We assume that $\cT$ contains fibre products, so that we have a notion of covering families $\cU=\{U_i\}_{i\in \cI}$ of $X$. We will define \v Cech cohomology for $X$ with values in sheaves in blue $B$-modules where $B$ is a blueprint with $-1$, which can also be thought of as an $\Funsq$-algebra. 

Throughout this part of the paper, we fix the site $\cT$ and the object $X$. For this section, we also fix the covering family $\cU$ and aim for defining the \v Cech cohomology $H^l(X,\cF;\cU)$ w.r.t.\ $\cU$.

A blueprint with $-1$ is a blueprint that has an element $-1$ that satisfies the additive relation $1+(-1)\=0$. This element is necessarily unique, which means that there is a unique blueprint morphism $\Funsq\to B$ from
\[
 \Funsq \ = \ \bpgenquot{\{0,1,-1\}} {1+(-1)\=0} 
\]
to $B$. By multiplying the defining relation for $-1$ with an arbitrary element $a$ of $B$, we see that $-a=(-1)\cdot a$ is an \emph{additive inverse of $a$}, i.e.\ it satisfies the relation $a+(-a)\=0$.

Let $\Mod_B^\bl$ be the category of blue $B$-modules and $\cF$ a sheaf on $\cT$ with values in $\Mod_B^\bl$. Let $\cU=\{U_i\}_{i\in\cI}$ be a covering family of $X$ where $\cI$ is a totally ordered index set.

\begin{df}
 For $l\geq 0$, we denote by $\cI_l$ the family of all subsets $I$ of $\cI$ with cardinality $l+1$. For such a subset, we write $I=(i_0,\dotsb,i_l)$ if $I=\{i_0,\dotsc,i_l\}$ and $i_0<\dotsb<i_l$. We define
 \[
  U_I \ = \ U_{i_0}\times_X\dotsc\times_X U_{i_l} \qquad\text{and}\qquad \cF_I \ = \ \cF(U_I),
 \]
 which is a blue $B$-module. Given $I\in\cI_l$ and $k\in\{0,\dotsc,l\}$, we denote by $I^k$ the set $\{i_0,\dotsc,\widehat{i_k},\dotsc,i_l\}$. The canonical projection $U_I\to U_{I^k}$ onto all factors but $U_k$ defines a morphism
 \[
  \partial_{k,I}^{(l)}: \ \cF_{I^k} \ \longrightarrow \ \cF_I.
 \]
 If we define
 \[
  \cC^l \ = \ \prod_{I\in\cI_l}\cF_I
 \]
 the morphisms $\partial_{k,I}^{(l)}$ for varying $I$ define a morphism
 \[
  \partial_k^{(l)}: \ \cC^{l-1} \ \longrightarrow \ \cC^{l}
 \]
 for every $k=0,\dotsc,l$. The \emph{\v Cech complex of $\cU$ with values in $\cF$} is the cosimplicial blue $B$-module
 \[
  \cC^\bullet \ = \ \cC^\bullet(X,\cF;\cU) \ = \ \Biggl( \xymatrix{\cC^0 \ar@<0.5ex>[r]^{\partial_0^{(1)}}\ar@<-0.5ex>[r]_{\partial_1^{(1)}}  & \cC^1 \ar@<1ex>[r]^{\partial_0^{(2)}}\ar[r]\ar@<-1ex>[r]_{\partial_2^{(2)}}    & \cC^2 \ar@<1.5ex>[r]\ar@<0.5ex>[r]\ar@<-0.5ex>[r]\ar@<-1.5ex>[r]    & \cC^3 \quad\dotsb \quad} \Biggr).
 \]
\end{df}

\begin{rem} \label{rem: total ceh complex}
 In practice, the index set $\cI$ is often finite. Then the \v Cech complex is finite since $\cC^l$ is the empty product, i.e.\ $\cC^l=0$, if $l\geq\#\cI$. 

 This cosimplicial set is often called the \emph{ordered \v Cech complex} in literature, in contrast to the \emph{total \v Cech complex $\cC_\tot^\bullet$} with $\cC_{\tot}^l=\prod\cF_{\{i_0,\dotsc,i_l\}}$ where the product is taken over all elements $(i_0,\dotsc,i_l)\in\cI^{l+1}$ without any assumption on the ordering or distinctness of the $i_k$'s.
\end{rem}

\begin{df}
 Let $\cC^\bullet$ be a cosimplicial blue $B$-module. The \emph{set of $l$-cocycles of $\cC^\bullet$} is 
\[
 \cZ^l \ = \ \cZ^l(\cC^\bullet) \ = \ \biggl\{\ x\in\cC^l \ \biggl|\  \sum_{k=0}^{l+1} (-1)^k\partial_k^{(l+1)}(x) \= 0 \ \bigg\}
\]
which we consider as a full blue $B$-submodule of $\cC^l$, i.e.\ the pre-addition of $\cZ^l$ is the restriction of the pre-addition of $\cC^l$ to $\cZ^l$. The \emph{set of $l$-coboundaries} is
\[
 \cB^l \ = \ \cB^l(\cC^\bullet) \ = \ \biggl\{\ x\in\cC^l \ \biggl|\ \exists y=\sum_i y_i \in(\cC^{l-1})^+\text{ such that }x \= \sum_i\sum_{k=0}^{l} (-1)^k\partial_k^{(l)}(y_i) \ \bigg\},
\]
which is considered as a full blue $B$-submodule of $\cC^l$. For the case $l=0$, we use $\cC_{-1}=\{0\}$.

If $\cC^\bullet=\cC^\bullet(X,\cF;\cU)$, then we also write $\cZ^l(X,\cF;\cU) \ = \ \cZ^l(\cC^\bullet)$ and $\cB^l(X,\cF;\cU) \ = \ \cB^l(\cC^\bullet)$. In this case, we have
\[
 \cZ^l(X,\cF;\cU) \ = \ \biggl\{\ (a_I)\in \prod_{I\in\cI_l}\cF_I \ \biggl|\ \forall J\in\cI_{l+1},\quad \sum_{k=0}^{l+1} (-1)^k\partial_{k,I}^{(l+1)}(a_{J^k}) \= 0 \ \bigg\}
\]
and
\[
 \cB^l(X,\cF;\cU) \ = \ \biggl\{\ (a_I)\in \prod_{I\in\cI_l}\cF_I \ \biggl|\ \exists (b_J)\in \prod_{J\in\cI_{l-1}}\cF_J^+,\quad \forall I\in\cI_{l},\ \ a_I \= \sum_{k=0}^{l} (-1)^k\partial_{k,I}^{(l)}(b_{I^k})\ \bigg\}.
\]
where we define $\delta_{k,I}^{(l)}(b_{J})=\sum_j\delta_{k,I}^{(l)}(b_{J,j})$ for $b_{J}=\sum_j b_{J,j}\in\cF_J^+$ and $J=I^k$.
\end{df}

\begin{lemma}
 For every $l\geq0$, we have $\cB^l(\cC^\bullet)\subset \cZ^l(\cC^\bullet)$.
\end{lemma}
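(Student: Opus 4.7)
The approach is the classical $d^2 = 0$ calculation, adapted to the blueprint setting where the signed sums defining $\cZ^l$ and $\cB^l$ acquire meaning via the relation $1+(-1) \= 0$ in $B$. Given $x \in \cB^l(\cC^\bullet)$, there exists by definition a formal expression $y = \sum_i y_i \in (\cC^{l-1})^+$ with
\[
 x \ \= \ \sum_i \sum_{k=0}^{l} (-1)^{k}\, \partial_k^{(l)}(y_i),
\]
and the goal is to show $\sum_{k'=0}^{l+1} (-1)^{k'} \partial_{k'}^{(l+1)}(x) \= 0$ in $\cC^{l+1}$.

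The first step is to push this relation forward to $\cC^{l+1}$. Each coface map $\partial_{k'}^{(l+1)}$ is a morphism of blue $B$-modules, since it is assembled from the sheaf restriction morphisms $\cF_{I^{k'}} \to \cF_I$, and therefore preserves the pre-addition $\=$. Multiplying by the signs $(-1)^{k'}$ (which are available because $B$ has $-1$) and summing yields
\[
 \sum_{k'=0}^{l+1}(-1)^{k'}\partial_{k'}^{(l+1)}(x) \ \= \ \sum_i \sum_{k=0}^{l}\sum_{k'=0}^{l+1} (-1)^{k+k'}\,\partial_{k'}^{(l+1)}\partial_k^{(l)}(y_i).
\]

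The key step is to establish the cosimplicial identity
\[
 \partial_{k'}^{(l+1)}\partial_k^{(l)} \ = \ \partial_k^{(l+1)}\partial_{k'-1}^{(l)} \qquad \text{for } k < k',
\]
which I would derive by observing that, for any $I \in \cI_{l+1}$, both compositions are induced by the same canonical projection $U_I \to U_{I\setminus\{i_k,i_{k'}\}}$ via the universal property of fibre products, so they agree after applying $\cF$. Splitting the double sum into the ranges $k' \leq k$ and $k' > k$ and reindexing the latter by $(k,k')\mapsto(k'-1,k)$, the substitution $k' = b+1$ produces an extra sign that cancels the former range termwise. The total sum is thus $\= 0$, so $x \in \cZ^l(\cC^\bullet)$, as required.

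The main technical caveat is not the combinatorial cancellation, which is the standard $d\circ d = 0$ argument, but the bookkeeping needed to carry it out over a pre-addition on formal sums $\sum_i y_i$ in the free monoid $(\cC^{l-1})^+$, rather than over a genuine abelian group. This is handled cleanly because morphisms of blue $B$-modules respect pre-addition, and because $-1 \in B$ provides strict additive inverses, so each rearrangement above remains a valid $\=$-relation in $\cC^{l+1}$.
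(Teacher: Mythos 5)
Your proof is correct and follows essentially the same route as the paper's: the standard $d\circ d=0$ cancellation, made legitimate in the blueprint setting because the coface maps preserve the pre-addition and $-1\in B$ supplies the signs. The only cosmetic difference is that you use the positional cosimplicial identity $\partial_{k'}^{(l+1)}\partial_k^{(l)}=\partial_k^{(l+1)}\partial_{k'-1}^{(l)}$ for $k<k'$ and reindex the sum, whereas the paper indexes the compositions by the removed elements of $L$ (so the identity becomes the symmetric commutation $\partial_k\circ\partial_{k'}=\partial_{k'}\circ\partial_k$ with the shift absorbed into a sign $\epsilon$); the two bookkeeping schemes are equivalent.
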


\begin{proof}
 Given $(a_i)\in\cB^l(\cC^\bullet)$, i.e.\ there is an element $(b_J)\in(\cC^{l-1})^+$ such that
 \[
  a_I \ \= \ \sum_{k=0}^{l} (-1)^k\partial_k^{(l)}(b_{I^k})
 \]
 for all $I\in\cI_l$, then we have for every $L\in\cI_{l+1}$
 \[
  \sum_k (-1)^k\partial_k^{(l+1)}(a_{L^k}) \ \= \ \sum_{k'\neq k} (-1)^k \ \partial_{k}^{l+1}\circ\partial_{k'}^l\,\Bigl((-1)^{k'+\epsilon}b_{L^{k,k'}}\Bigr)
 \]
 where $\epsilon=0$ if $k'<k$ and $\epsilon=1$ if $k'>k$, and $L^{k,k'}=L-\{k,k'\}$. Since $\partial_{k}^{l+1}\circ\partial_{k'}^l=\partial_{k'}^{l+1}\circ\partial_{k}^l$, the above sum equals
 \[
  \sum_{k'< k} (-1)^{k+k'}\ \partial_{k}^{l+1}\circ\partial_{k'}^l\,\bigl(b_{L^{k,k'}}\bigr) \quad + \quad \sum_{k< k'} (-1)^{k+k'+1}\ \partial_{k}^{l+1}\circ\partial_{k'}^l\,\bigl(b_{L^{k,k'}}\bigr) \quad \= \quad 0,
 \]
 which shows that $(a_I)\in\cZ^l(\cC^\bullet)$.
\end{proof}

\begin{df}
 The \emph{$l$-th \v Cech cohomology of $X$ w.r.t\ $\cU$ and with values in $\cF$} is defined as the quotient
 \[
  H^l(\cC^\bullet) \ = \ \cZ^l(\cC^\bullet)\ / \ \cB^l(\cC^\bullet)
 \]
 of blue $B$-modules. If $\cC^\bullet=\cC^\bullet(X,\cF;\cU)$, then we also write $H^l(X,\cF;\cU) \ = \ H^l(\cC^\bullet)$.
\end{df}

Recall that a morphism $\Psi: \cC^\bullet\to\cD^\bullet$ of cosimplicial blue $B$-modules is a collection of morphisms $\psi_l:\cC^l\to\cD^l$ of blue $B$-modules for all $l\geq0$ that commute with the respective coboundary maps $\partial_k^{(l)}$ of $\cC^\bullet$ and $\cD^\bullet$, i.e.\ $\partial_{k}^{(l)}\circ\psi_{l-1}=\psi_l\circ\partial_{k}^{(l)}$ for all $l\geq 0$ and $0\leq k\leq l$.

\begin{lemma}\label{lemma: morphisms induce maps between cohomology}
 Let $\Psi: \cC^\bullet\to\cD^\bullet$ be a morphism of cosimplicial blue $B$-modules. Then 
 \[
  \psi_l(\cZ^l(\cC^\bullet))\subset\cZ^l(\cD^\bullet) \qquad \text{and} \qquad \psi_l(\cB^l(\cC^\bullet))\subset\cB^l(\cD^\bullet). 
 \]
 Consequently, $\Psi$ induces a morphism
 \[
  H^l(\cC^\bullet) \ = \ \cZ^l(\cC^\bullet) \ / \ \cB^l(\cC^\bullet) \quad \longrightarrow \quad \cZ^l(\cD^\bullet) \ / \ \cB^l(\cD^\bullet) \ = \ H^l(\cD^\bullet)
 \]
 for every $l\geq0$.
\end{lemma}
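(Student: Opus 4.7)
The plan is to reduce the lemma to two elementary observations: a morphism of cosimplicial objects commutes on the nose with each coboundary map $\partial_k^{(l)}$, and a morphism of blue $B$-modules preserves the pre-addition, so in particular sends every relation of the form $\sum\=0$ to another such relation. Since the cocycle and coboundary conditions of the previous definition are precisely pre-additive relations built from the $\partial_k^{(l)}$ and the element $-1\in B$, both of the required inclusions should follow formally from these two facts.

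First I would handle the cocycle inclusion. Given $x\in\cZ^l(\cC^\bullet)$, the defining relation $\sum_{k=0}^{l+1}(-1)^k\partial_k^{(l+1)}(x)\=0$ holds in $\cC^{l+1}$. Applying $\psi_{l+1}$ and invoking (i) the intertwining identity $\psi_{l+1}\circ\partial_k^{(l+1)}=\partial_k^{(l+1)}\circ\psi_l$ coming from the cosimplicial structure and (ii) $B$-linearity of $\psi_{l+1}$ (so it commutes with scaling by $-1$) together with its preservation of pre-additive relations, I would translate this into $\sum_k(-1)^k\partial_k^{(l+1)}(\psi_l(x))\=0$ in $\cD^{l+1}$, which is precisely the cocycle condition for $\psi_l(x)$.

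Second, for the coboundary inclusion I would start from a representation $x\=\sum_i\sum_k(-1)^k\partial_k^{(l)}(y_i)$ with $y=\sum_iy_i\in(\cC^{l-1})^+$. The morphism $\psi_{l-1}$ extends termwise to a map $(\cC^{l-1})^+\to(\cD^{l-1})^+$ sending $\sum_iy_i$ to $\sum_i\psi_{l-1}(y_i)$, and this extension intertwines the coboundary maps componentwise by the very definition of $\partial_k^{(l)}$ on $(\cC^{l-1})^+$. Applying $\psi_l$ to the above relation and using the same two ingredients as before, I obtain an identical expression for $\psi_l(x)$ in $\cD^l$ with each $y_i$ replaced by $\psi_{l-1}(y_i)$, exhibiting $\psi_l(x)$ as an element of $\cB^l(\cD^\bullet)$.

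Once both inclusions are established, the induced map on cohomology $H^l(\cC^\bullet)\to H^l(\cD^\bullet)$ is defined as the evident map on quotients of blue $B$-modules: it is well defined thanks to the coboundary inclusion, and it is a morphism of blue $B$-modules because $\psi_l$ is. The only step requiring any real attention is the bookkeeping in the coboundary case — checking that applying $\psi$ honestly commutes with the termwise extension to $(\cC^{l-1})^+$ — but this is immediate from the definition of a morphism of blue $B$-modules, so I do not anticipate any substantive obstacle.
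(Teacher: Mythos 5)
Your proposal is correct and follows essentially the same route as the paper's proof: apply $\psi$ to the defining pre-additive relation, use the intertwining identity $\psi\circ\partial_k=\partial_k\circ\psi$ and the fact that morphisms of blue $B$-modules preserve pre-additive relations and scaling by $-1$. If anything, your handling of the coboundary case is slightly more careful than the paper's, which writes the witness as a single element $y\in\cC^{l-1}$ rather than spelling out the termwise extension to $(\cC^{l-1})^+$ as you do.
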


\begin{proof}
 Let $x\in\cZ^l(\cC^\bullet)$, i.e.\ $\sum(-1)^k\partial_k^{(l+1)}(x)\=0$. Then $\psi_l(x)\in\cD^l$ satisfies
 \[
  \sum_{k=0}^{l+1} \ (-1)^k \ \partial_k^{(l+1)}(\psi(x)) \quad \= \quad \sum_{k=0}^{l+1} \ (-1)^k \ \psi_{l+1}\partial_k^{(l+1)}(x) \quad \= \quad \psi_{l+1}(0) \quad \= \quad 0.
 \]
 This shows that $\psi(x)\in\cZ^l(\cD^\bullet)$. Let $x\in\cB^l(\cC^\bullet)$, i.e.\ there exists an $y \in\cC^{l-1}$ with $x\=\sum(-1)^k\partial_k^{(l)}(y)$. Then we have
 \[
  \psi_l(x) \quad \= \quad \sum(-1)^k\psi_l\Bigl(\partial_k^{(l)}(y)\Bigr) \quad \= \quad \sum(-1)^k\partial_k^{(l)}\bigl(\psi_{l-1}(y)\bigl),   
 \]
 which shows that $\psi_l(x)\in\cB^l(\cD^\bullet)$.
\end{proof}

Next, we prove that the \v Cech cohomology w.r.t.\ $\cU$ does not depend on the ordering of the index set $\cI$. Note that the definition of the \v Cech complex $\cC^\bullet$ is independent of the ordering of $\cI$.

\begin{prop}
 For $l\geq0$, the subsets $\cB^l$ and $\cZ^l$ of $\cC^l$ are independent of the ordering of $\cI$. Consequently, $H^l(X,\cF;\cU)$ does not depend on the ordering of $\cI$.
\end{prop}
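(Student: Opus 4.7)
The plan is to produce, for any two orderings $<_1$ and $<_2$ of $\cI$, an explicit componentwise sign-change isomorphism $\phi = (\phi_l): \cC^\bullet \to \cC^\bullet$ of the underlying sequence of blue $B$-modules that intertwines the total differentials $d_i^{(l)} = \sum_k (-1)^k \partial_k^{(l)}$ computed with respect to $<_i$, and then to invoke (the argument of) Lemma~\ref{lemma: morphisms induce maps between cohomology} to identify cocycles, coboundaries, and cohomology across the two orderings.

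First I would reduce to the case of a single adjacent transposition: two orderings that differ only in the swap of two elements $i, j \in \cI$ which are consecutive under both (no third element strictly between). Since any two total orderings are connected by a sequence of such transpositions, and since only finitely many of them affect the ordering on any fixed $I \in \cI_l$, iterating yields the general case.

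For such a transposition, I would take
\[
(\phi_l(a))_I \ = \ \epsilon_I \cdot a_I, \qquad \epsilon_I \ = \ \begin{cases} -1 & \text{if }\{i,j\} \subseteq I, \\ \phantom{-}1 & \text{otherwise,} \end{cases}
\]
which is plainly a $B$-linear involution of each $\cC^l$. The heart of the proof is the verification of $\phi_l \circ d_1^{(l)} = d_2^{(l)} \circ \phi_{l-1}$ componentwise. For $I$ containing neither or exactly one of $i, j$, the induced orderings on $I$ and all its facets agree under $<_1$ and $<_2$, and the signs $\epsilon_I, \epsilon_{I^k}$ are all $+1$, so both sides coincide trivially. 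For $I$ containing both $i$ and $j$, the adjacency of $i, j$ in $\cI$ forces them to be adjacent in $I$ as well, at positions $(p, p+1)$ under $<_1$ and swapped under $<_2$; the sign $\epsilon_I = -1$ compensates exactly for exchanging the $p$-th and $(p+1)$-th terms of the alternating sum, while $\epsilon_{I^k} = -1$ for $k \neq p, p+1$ (where $I^k$ still contains both $i, j$) gives the correct matching on the remaining facets, the two facets obtained by removing $i$ or $j$ contributing the transposed pair.

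Finally, by the argument in Lemma~\ref{lemma: morphisms induce maps between cohomology}---which only requires commutation with the total differential $d^{(l)} = \sum_k (-1)^k \partial_k^{(l)}$, not with each individual face map---the chain map $\phi$ restricts to bijections $\cZ^l_{<_1} \leftrightarrow \cZ^l_{<_2}$ and $\cB^l_{<_1} \leftrightarrow \cB^l_{<_2}$, identifying these subsets of $\cC^l$ under the canonical sign relabeling given by $\phi$, and consequently induces an isomorphism $H^l(X, \cF; \cU)_{<_1} \to H^l(X, \cF; \cU)_{<_2}$. I expect the main obstacle to lie in the sign bookkeeping of the intertwining step: one must ensure that the single sign $\epsilon_I$ assigned to each $I$ is simultaneously compatible with all $l+1$ face maps, which works precisely because $i$ and $j$ remain adjacent in every $I$ that contains them both.
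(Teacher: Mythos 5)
Your map is, in substance, the same one the paper uses, namely the componentwise sign change $a_I\mapsto\sign(\sigma_I)\,a_I$, where $\sigma_I$ compares the two induced orderings of $I$; the paper obtains it as the composite $\tilde\pi\circ\iota$ through the alternating complex $\cC^\bullet_\alt$ (whose definition is ordering-free), while you assemble it from adjacent transpositions and check the intertwining with the total differentials by hand. Your sign bookkeeping in the transposition case is correct: adjacency of $i,j$ in $\cI$ forces adjacency in every $I$ containing both, and the signs $\epsilon_I=-1$, $\epsilon_{I^k}=-1$ for $k\neq p,p+1$ and $\epsilon_{I^p}=\epsilon_{I^{p+1}}=+1$ match term by term.

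Two caveats. First, the reduction to adjacent transpositions only works for finite $\cI$: two total orderings of an infinite set are in general not connected by a sequence of adjacent transpositions (compare $\N$ with its usual and its reversed order), and the proposition is stated for arbitrary covering families. Either restrict to finite $\cI$, verify the intertwining directly for $a_I\mapsto\sign(\sigma_I)a_I$ with $\sigma_I$ arbitrary, or pass through $\cC^\bullet_\alt$ as the paper does. Second, be aware that what your argument establishes --- and what the paper's argument actually delivers as well, despite the wording of the statement --- is that $\phi$ carries $\cZ^l_{<_1}$ onto $\cZ^l_{<_2}$ and $\cB^l_{<_1}$ onto $\cB^l_{<_2}$, hence induces an isomorphism of the cohomology modules. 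Literal equality of the subsets of $\cC^l$ would require $\cZ^l_{<_1}$ to be stable under the componentwise sign change $\phi$, which fails in general: already for $\cI=\{1,2,3\}$, $B=\Z$ and constant sections $a_{12}=-1$, $a_{13}=0$, $a_{23}=1$, the cocycle condition $a_{23}-a_{13}+a_{12}\equiv 0$ holds while $a_{13}-a_{23}+a_{12}\equiv 0$ does not. Your formulation via the canonical sign relabeling is therefore the correct one to retain, and it is all that is needed for the rest of the paper.
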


\begin{proof}
 The usual argument works in this context: we show that $\cC^\bullet$ is isomorphic to the \emph{alternating \v Cech complex $\cC_\alt^\bullet$} as a cosimplicial blue $B$-module. The blue $B$-modules $\cC^l_\alt$ are defined as all elements $a_I$ of $\cC^l_\tot$ that satisfy 
 \[
  a_I \ = \ 0
 \]
 if $I=(i_0,\dotsc,i_l)$ with $i_k=i_{k'}$ for some $k\neq k'$, and
 \[
   a_{\sigma I} \ = \ \sign(\sigma) \, a_{I}   
 \]
 for a permutation $\sigma\in S_{l+1}$ and $\sigma I=(i_{\sigma(0)},\dotsc,i_{\sigma(l)})$. This defines a cosimplicial subset $\cC^\bullet_\alt $ of $\cC^\bullet_\tot$. Consider the following morphisms of blue $B$-modules
 \[
  \begin{array}{cccc}
   \pi: & \cC^l_\alt             & \longrightarrow & \cC^l. \\
          & (a_I)_{I\in\cI^{l+1}} & \longmapsto     & (a_I)_{I\in\cI_{l}}
  \end{array}
 \]
 and
 \[
  \begin{array}{cccc}
   \iota: & \cC^l             & \longrightarrow & \cC^l_\alt \\
          & (a_I)_{I\in\cI_l} & \longmapsto     & (\widetilde{a_I})_{I\in\cI^{l+1}}
  \end{array}
 \]
 with $\widetilde{a_I}=\sign(\sigma)a_{\sigma I}$ if $\sigma I\in\cI_l$ and $\widetilde{a_I}=0$ if $I=(i_0,\dotsc,i_l)$ with $i_k=i_{k'}$ for some $k\neq k'$. As in the usual case of \v Cech cohomology with values in abelian categories, it is easily verified that $\iota$ and $\pi$ are mutually inverse isomorphisms.

 If $\widetilde\cI$ is the index set $\cI$ with a different ordering and $\tilde\pi:\cC^\bullet_\alt\to\cC^\bullet$ is the isomorphism with respect to this ordering, then the automorphism $\tilde\pi\circ\iota:\cC^\bullet\to\cC^\bullet$ sends the set $\cZ^l$ of $l$-cocycles w.r.t.\ to the ordering of $\cI$ to the set $\widetilde\cZ^l$ of $l$-coboundaries w.r.t.\ the ordering of $\widetilde\cI$. More precisely, $\tilde\pi\circ\iota$ sends $a_I$ to $\sign(\sigma)a_{\sigma I}$ where $\sigma$ is the permutation such that $\sigma I$ is ordered w.r.t.\ to the ordering of $\cI$. Since $B$ is with $-1$, we see that $\widetilde\cZ^l=\cZ^l$.

 Similarly, $\tilde\pi\circ\iota$ restricts to an automorphism of $\cB^l$. This shows the claim of the proposition.
\end{proof}


\section{Refinements}
\label{section: refinements}

\noindent
In this section, we show that the \v Cech cohomology $H^l(X,\cF;\cU)$ is functorial in refinements, so that we form the colimit $H^l(X,\cF)=\colim H^l(X,\cF;\cU)$, which does not depend on the choice of a covering family of $X$ anymore.

\begin{df}
 A \emph{refinement of a covering family $\cU=\{U_i\}_{i\in\cI}$} is a covering family $\cV=\{V_j\}_{j\in\cJ}$ together with a map $\varphi:\cJ\to\cI$ and a morphism $\varphi_j:V_j\to U_{\varphi(i)}$ for every $j\in\cJ$. We write $\Phi:\cV\to \cU$ for such a refinement.
\end{df}

Given a refinement $\Phi:\cV\to \cU$ of $\cU$, we get induced maps $\varphi:\cJ_l\to\cI_l$ that send $J=\{j_0,\dotsc,j_l\}$ to $\varphi(J)=\{\varphi(j_0),\dotsc,\varphi(j_l)\}$ and morphisms
\[
 \varphi_J: \quad V_J \ = \ V_{j_0}\times_X \dotsb \times_X V_{j_l} \quad \longrightarrow \quad U_{\varphi(j_0)}\times_X \dotsb \times_X U_{\varphi(j_l)} \ = \ U_{\varphi(J)}
\]
for every $J\in\cJ_l$ and $l\geq 0$. This defines, in turn, a morphism $\psi_l:\cC^l(X,\cF;\cU)\to\cC^l(X,\cF;\cV)$ for every $l\geq0$. The morphisms $\psi_l$ commute with the respective coboundary morphisms $\partial_k^{(l)}$ of $\cC^\bullet(X,\cF;\cV)$ and $\cC^\bullet(X,\cF;\cU)$. Thus $\Phi:\cV\to\cU$ induces a morphism $\Psi:\cC^\bullet(X,\cF;\cU)\to\cC^\bullet(X,\cF;\cV)$ of cosimplicial blue $B$-modules, which maps cocycles to cocycles and coboundaries to coboundaries. This means that we get a morphism
\[
 \Psi: \quad H^l(X,\cF;\cU) \quad \longrightarrow \quad H^l(X,\cF;\cV) 
\]
from the \v Cech cohomology w.r.t.\ $\cU$ to the \v Cech cohomology w.r.t.\ $\cV$.

\begin{df}
 The \emph{\v Cech cohomology of $X$ with values in $\cF$} is defined as the colimit
 \[
  H^l(X,\cF) \ = \ \colim\ H^l(X,\cF;\cU)
 \]
 over the system of all covering families $\cU$ of $X$ together with all refinements $\Phi:\cV\to\cU$ of covering families.
\end{df}


\part{Cohomology of monoidal schemes}
\label{part: monoidal schemes}

\section{Monoidal schemes over a blueprint}
\label{section: monoidal schemes}

\noindent
Monoidal schemes a.k.a.\ monoid schemes a.k.a.\ $\Fun$-schemes (in the sense of Deitmar, \cite{Deitmar05}, or To\"en and Vaqui\'e, \cite{Toen-Vaquie09}) form the core of $\Fun$-geometry in the sense that they appear as a natural subclass in every approach towards $\Fun$-schemes.

In this section, we introduce monoidal schemes over a blueprint $B$ as certain blue schemes over $B$. Note that refer to the notion of blue schemes from \cite{L15}, which can be seen as an improvement of the original definition in terms of prime ideals, as contained in \cite{L13}. If $B$ happens to be a global blueprint, e.g.\ a monoid, a ring or a blue field, then both definitions give rise to an equivalent theory. In this case, one can also adopt the viewpoint of To\"en and Vaqui\'e in \cite{Toen-Vaquie09}, which yields yet another theory in general.

To start with, we adapt the concept of a semigroup ring to the context of blueprints. By a \emph{monoid}, we mean a commutative and associative semigroup with neutral element $1$ and absorbing element $0$. All monoids will be written multiplicatively. A monoid morphism is a multiplicative map that sends $1$ to $1$ and $0$ to $0$.

Let $B=\bpquot{A}{\cR}$ be a blueprint and $M$ a monoid. The \emph{monoid blueprint of $M$ over $B$} is the blueprint $B[M]=\bpquot{A_M}{\cR_M}$ that is defined as follows. The monoid $A_M$ is the smash product $A\wedge M$, which is the quotient of $A\times M$ by the equivalence relation that is generated by the relations $(a,0)\sim (b,0)$ and $(0,m)\sim(0,n)$ with $a,b\in A$ and $m,n\in M$. The pre-addition $\cR$ is generated by the set of additive relations 
\[
 \bigl\{ \ \sum(a_i,1)\=\sum(b_j,1) \ \bigl| \ \sum a_j\=\sum b_j\text{ in }B \ \bigr\}.
\]
Note that $B[M]$ has the universal property that any pair of a blueprint morphism $B\to C$ and a monoid morphism $M\to C$ extends uniquely to a blueprint morphism $B[M]\to C$. Note further that as a blue $B$-module, $B[M]$ is isomorphic to \mbox{$\bigvee_{m\in M-\{0\}} B\cdot m$}.

\begin{df}
 Let $B$ be a blueprint. A \emph{monoidal scheme over $B$} is a blue scheme $X$ that has an open affine covering $\{U_i\}_{i\in\cI}$ such that 
 \begin{enumerate}
  \item for every $i\in\cI$, there is a monoid $M_i$ and an isomorphism $\cO_X(U_i)\simeq B[M_i]$ of blueprints;
  \item for every $i,j\in\cI$, the intersection $U_i\cap U_j$ is covered by affine opens of the form $V_{i,j,k}=\Spec B[N_{i,j,k}]$ for some monoids $N_{i,j,k}$ such that the restriction map $\res:B[M_i]\to B[N_{i,j,k}]$ is the localization at some multiplicative subset $S_{i,k}$ of $M_i$, i.e.\ $N_{i,j,k}=S^{-1}_{i,k} M_i$; and the same holds true for the restriction map $\res:B[M_j]\to B[N_{i,j,k}]$.
 \end{enumerate}
\end{df}

\begin{rem}
 Note that a monoidal scheme over $\Fun$ is nothing else than a monoidal scheme in the usual sense. One can extend the method from \cite{Deitmar08} to show that $X^+_\Z$ is a toric variety over the ring $B^+_\Z$ if $X$ is connected separated integral torsion-free monoidal scheme of finite type over $B$.
\end{rem}

\begin{prop}\label{prop: monoidal schemes are defined over f1}
 Let X be a blue scheme over $B$. Then $X$ is monoidal over $B$ if and only if there is a monoidal scheme $X_\Fun$ over $\Fun$ such that $X$ is isomorphic to $X_\Fun\times_{\Spec\Fun}\Spec B$.
\end{prop}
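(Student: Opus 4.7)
The plan rests on the base-change identity $B[M] \cong B \otimes_\Fun \Fun[M]$, where $\Fun[M] = M$; this follows immediately from the universal property stated just before the definition. Geometrically it reads $\Spec B[M] \cong \Spec M \times_{\Spec \Fun} \Spec B$, naturally in $M$, and under it the localization $M \to S^{-1}M$ at a multiplicative subset $S \subset M$ corresponds to the localization of $B[M]$ at $S \subset M \hookrightarrow B[M]$. All subsequent arguments are driven by this single observation.

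For the ``if'' direction I would take a monoidal scheme $X_\Fun$ over $\Fun$ with defining cover $\{\Spec M_i\}$ and intersection refinement $\{\Spec N_{i,j,k}\}$, $N_{i,j,k} = S_{i,k}^{-1} M_i$, and simply base-change along $\Spec B \to \Spec \Fun$. The setup paragraph then produces the cover $\{\Spec B[M_i]\}$ and refinement $\{\Spec B[N_{i,j,k}]\}$ of $X_\Fun \times_{\Spec \Fun} \Spec B$ and shows that the restriction maps remain localizations at the same multiplicative subsets, so both axioms (i) and (ii) are satisfied.

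For the ``only if'' direction I would extract from the data of $X$ the monoids $M_i$, $N_{i,j,k}$ and the localization maps $M_i \to N_{i,j,k}$ at $S_{i,k} \subset M_i$ (and symmetrically for $M_j$), and glue $\{\Spec M_i\}$ along the ensuing open immersions in the category of monoidal $\Fun$-schemes to produce a candidate $X_\Fun$. Assuming the cocycle condition can be verified, the base change $X_\Fun \times_{\Spec \Fun} \Spec B$ is obtained by gluing $\{\Spec B[M_i]\}$ along exactly the restriction maps that define $X$, so it recovers $X$ up to canonical isomorphism.

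The main obstacle is verifying the cocycle condition for the $\Fun$-level gluing, i.e.\ that on triple overlaps the two composite localization maps $M_i \to \cdots$ agree at the monoid level and not merely after applying $B[-]$. I would reduce this to the known cocycle for $X$ via the faithfulness of the functor $B[-]$ on monoid morphisms: the canonical inclusion $M \hookrightarrow B[M]$, $m \mapsto 1 \cdot m$, is injective, so two monoid morphisms $M \to N$ that coincide after extension to $B[M] \to B[N]$ must already coincide. Applied to the two compositions of transition maps over a triple overlap, this promotes the cocycle for $X$ to a cocycle over $\Fun$, and the resulting $X_\Fun$ is then a genuine monoidal scheme with $X_\Fun \times_{\Spec \Fun} \Spec B \cong X$.
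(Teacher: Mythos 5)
Your proposal is correct and follows essentially the same route as the paper: both directions rest on the identity $B[M]\cong\Fun[M]\otimes_\Fun B$, with the converse obtained by descending the covering data $\{M_i\}$, $\{N_{i,j,k}=S_{i,k}^{-1}M_i\}$ to $\Fun$ and reassembling. The only cosmetic difference is that the paper realizes $X_\Fun$ as the colimit of the diagram of the $\Spec\Fun[M_i]$ and $\Spec\Fun[N_{i,j,k}]$ with the canonical localization maps (already determined by the multiplicative sets $S_{i,k}$ appearing in the definition of a monoidal scheme over $B$), so the cocycle verification you handle via faithfulness of $B[-]$ is simply absorbed into the colimit construction.
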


\begin{proof}
 Since $B[M]\simeq\Fun[M]\otimes_\Fun B$, it is clear that the base extension $X_\Fun\times_{\Spec\Fun}\Spec B$ of a monoidal scheme $X_\Fun$ to $B$ is monoidal over $B$.

 To prove the other direction of the equivalence, assume that $X$ has a covering $\{U_i\}$ with $U_i=\Spec B[M_i]$ for certain monoids $M_i$. The pairwise intersections $U_i\cap U_j$ have coverings $\{V_{i,j,k}\}$ with $V_{i,j,k}=\Spec B[N_{i,j,k}]$ where each monoid $N_{i,j,k}$ is a localization of both $M_i$ and $M_j$, i.e.\ 
 \[
  B[N_{i,j,k}] \quad = \quad B[S_{i,k}^{-1}M_i]\quad = \quad B[S_{j,k}^{-1}M_j]
 \]
 for certain multiplicative subsets $S_{i,k}$ of $M_i$ and $S_{j,k}$ of $M_j$. If $\cD$ is the diagram of all $U_i$ and $V_{i,j,k}$ together with the inclusions $V_{i,j,k}\to U_i$ and $V_{i,j,k}\to U_j$, then $X$ is the colimit of $\cD$.

 We define the affine monoidal schemes $U_{i,\Fun}=\Spec\Fun[M_i]$ and $V_{i,j,k}=\Spec\Fun[N_{i,j,k,\Fun}]$. The colimit of resulting diagram $\cD_\Fun$ defines a blue scheme $X_\Fun$, which is monoidal over $\Fun$ since $\{U_{i,\Fun}\}$ is a covering of $X_\Fun$. It is clear from the construction that $X\simeq X_\Fun\times_{\Spec\Fun}\Spec B$. This finishes the proof of the proposition.
\end{proof}

Recall that a blue scheme $X$ is \emph{separated over $\Fun$} if the diagonal morphism $\Delta: X\to X\times X$ is a closed immersion. An important consequence is that the intersection of two affine subschemes of a separated blue scheme is affine.

\begin{cor}\label{cor: intersections of opens in monoidal schemes}
 Let $X_\Fun$ be a separated monoidal scheme over $\Fun$ and $X_B=X\otimes_\Fun B$ its base extension to $B$. Consider two open affine subsets $U_1$ and $ U_2$ of $X_B$ such that $\cO_{X_B}(U_1)$ and $\cO_{X_B}(U_2)$ are monoid blueprints over $B$. Then $\cO_{X_B}(U_1\cap U_2)$ is a monoid blueprint over $B$.
\end{cor}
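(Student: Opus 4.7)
The strategy is to descend the situation from $X_B$ to $X_\Fun$, take the intersection there, and then base-change back to $B$. As a first step I would observe that each $U_i$ descends to an affine open $U_i^\Fun$ of $X_\Fun$ with $\cO_{X_\Fun}(U_i^\Fun)\simeq\Fun[M_i]$ and $U_i\simeq U_i^\Fun\times_{\Spec\Fun}\Spec B$. This is essentially the construction used in the proof of Proposition \ref{prop: monoidal schemes are defined over f1}: the hypothesis $\cO_{X_B}(U_i)\simeq B[M_i]$ together with the identity $B[M_i]=\Fun[M_i]\otimes_\Fun B$ is exactly what is needed to lift $U_i$ through the base extension.

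Since $X_\Fun$ is separated, $W:=U_1^\Fun\cap U_2^\Fun$ is an open affine subscheme of $X_\Fun$, and I next show that $\cO_{X_\Fun}(W)\simeq\Fun[N]$ for some monoid $N$. The definition of a monoidal scheme applied to the pair $U_1^\Fun,U_2^\Fun$ yields a cover of $W$ by principal opens $V_k^\Fun=\Spec\Fun[N_k]$ in which each $N_k$ is a common localization $S_{1,k}^{-1}M_1=S_{2,k}^{-1}M_2$. Because $W$ is affine and each $V_k^\Fun$ is a principal localization of $W$, the global sections $\cO_{X_\Fun}(W)$ form the equalizer of the corresponding \v Cech diagram of monoid blueprints $\Fun[N_k]$; working inside a common localization of $M_1$ this equalizer visibly has the form $\Fun[N]$ with $N$ the intersection of the $N_k$ inside that localization, so $W\simeq\Spec\Fun[N]$.

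Finally, since base extension along $\Spec B\to\Spec\Fun$ commutes with open immersions and with fibre products,
\[
 U_1 \cap U_2 \ = \ W\times_{\Spec\Fun}\Spec B,
\]
and passing to global sections gives $\cO_{X_B}(U_1 \cap U_2)\simeq\Fun[N]\otimes_\Fun B\simeq B[N]$, a monoid blueprint over $B$, as required.

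I expect the main obstacle to be the structural claim of the second paragraph, namely that an affine open of a monoidal $\Fun$-scheme is again monoidal. In the proof this reduces to showing that a finite equalizer (in the category of blueprints) of monoid blueprints $\Fun[N_k]$ obtained from localizations at multiplicative subsets of a common ambient monoid is itself a monoid blueprint, the key input being that all these localizations embed naturally into one another. The descent of $U_i$ in the first paragraph and the base-change step at the end are both routine manipulations with the functor $B[-]=\Fun[-]\otimes_\Fun B$ and should pose no essential difficulty.
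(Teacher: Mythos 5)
Your overall route is the same as the paper's: descend $U_1,U_2$ to affine opens $V_1,V_2$ of $X_\Fun$ via Proposition \ref{prop: monoidal schemes are defined over f1}, use separatedness to conclude that $V_0=V_1\cap V_2$ is affine, identify $\cO_{X_\Fun}(V_0)$ as $\Fun[M_0]$ for a monoid $M_0$, and base-change back using $B[M_0]=\Fun[M_0]\otimes_\Fun B$. The first and last steps are fine and match the paper.

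The genuine gap is exactly where you anticipate it, in your second paragraph, and the sketch you give does not close it. First, ``working inside a common localization of $M_1$'' presupposes that the localization maps $M_1\to S_{1,k}^{-1}M_1$ are injective and that all the $N_k$ sit inside one ambient localization; this fails for non-integral monoids, where localization can collapse elements, so ``the intersection of the $N_k$'' is not well defined in general. Second, even granting that $\cO_{X_\Fun}(V_0)$ is the equalizer of a diagram of monoid blueprints, a limit of monoid blueprints computed in the category of blueprints need not be a monoid blueprint: the equalizer is a full sub-blueprint of a product, and the paper's own example $B\times B$ with the relation $(1,0)+(0,1)\=(1,1)$ shows that products of monoid blueprints already acquire non-trivial additive relations. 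So ``visibly has the form $\Fun[N]$'' is asserting precisely the non-trivial content. The paper closes this step by invoking Vezzani's theorem (\cite[Thm.~30]{Vezzani12}), which states that an affine open of a separated monoidal scheme contained in $V_1\cap V_2$ is the spectrum of a monoid $M_0$ that is a \emph{common localization} of $M_1$ and $M_2$; this is a substantive structural result about monoid schemes (its analogue for rings is false), not something that follows from a routine equalizer computation. To make your argument self-contained you would essentially have to reprove that theorem.
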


\begin{proof}
 Let $M_1$ and $M_2$ be monoids such that $\cO_{X_B}(U_i)\simeq B[M_i]$ for $i=1,2$. By Proposition \ref{prop: monoidal schemes are defined over f1}, there are open affine subschemes $V_1$ and $V_2$ of $X$ such that $\cO_X(V_i)\simeq\Fun[M_i]$ for $i=1,2$. We have
 \[
  U_1 \ \cap \ U_2 \ = \ U_1 \, \times_{X_B} \, U_2 \ = \ \bigl( V_1 \, \times_X \, V_2 \bigr) \, \otimes_\Fun \, B \ = \ \bigl( V_1 \, \cap \ V_2 \bigr) \, \otimes_\Fun \ B.
 \]
 Since $X$ is separated over $\Fun$, the intersection $V_0=V_1\cap V_2$ is affine. By \cite[Thm.\ 30]{Vezzani12}, there is a monoid $M_0$ that is a localization of both $M_1$ and $M_2$ such that $\cO_X(V_0)\simeq\Fun[M_0]$. Since the intersection $U_0=U_1\cap U_2$ is isomorphic to the base extension of $V_0$ to $B$, we have $\cO_{X_B}(U_0)\simeq B[M_0]$. This proves the corollary.
\end{proof}

For monoidal schemes over blue fields we can conclude the following.

\begin{prop}\label{prop: monoidal schemes over blue fields}
 If $B$ is a blue field and $X$ is a monoidal scheme over $B$, then every open subset $U$ of $X$ has an open affine covering $\{U_i\}$ with $U_i=\Spec B[M_i]$ for certain monoids $M_i$.
\end{prop}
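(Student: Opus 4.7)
My plan is first to reduce the proposition to a local statement about affine monoidal schemes $\Spec B[M]$, and then to establish a basis of principal opens of the required form. By the definition of a monoidal scheme over $B$, the scheme $X$ admits an affine covering $\{V_j\}_{j\in\cJ}$ with $\cO_X(V_j)\simeq B[M_j]$ for certain monoids $M_j$. Any open $U\subseteq X$ is covered by the open subsets $U\cap V_j\subseteq V_j$, so it suffices to treat the case $X=\Spec B[M]$: concretely, I need to show that any open subset $W$ of $\Spec B[M]$ admits a covering by principal opens of the form $\Spec B[M_f]$ with $f\in M$, where $M_f$ denotes the localization of $M$ at $\{1,f,f^2,\dotsc\}$.

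The next step is to analyze the topology of $\Spec B[M]$ using that $B$ is a blue field. Since every nonzero element of a blue field is a unit, $\Spec B$ consists of a single point, and by Proposition \ref{prop: monoidal schemes are defined over f1} the projection
\[
 \pi:\Spec B[M]\ = \ \Spec\Fun[M]\times_{\Spec\Fun}\Spec B \ \longrightarrow \ \Spec \Fun[M]
\]
is a homeomorphism on underlying topological spaces, as base change along a map from a point adds no new fibres. Under this identification, the principal open $D(f)\subseteq\Spec\Fun[M]$ corresponds to the principal open in $\Spec B[M]$ defined by $f\in M\subset B[M]$, and the latter is canonically $\Spec B[M_f]$ by the universal property of the monoid blueprint and of localization. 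Since the topology of $\Spec\Fun[M]$ has a basis of principal opens $D(f)$ with $f\in M$, pulling back through $\pi$ yields a basis of principal opens of $\Spec B[M]$ of the form $\Spec B[M_f]$, so any open $W\subseteq\Spec B[M]$ has the required covering.

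The main obstacle I expect is this second step: justifying rigorously within the blue-scheme framework of \cite{L15} both that $\pi$ is a homeomorphism and that the basis of opens is generated by elements of $M$ rather than arbitrary elements of $B[M]$. The blue-field hypothesis is essential here: for a general blueprint $B$, the spectrum $\Spec B$ may contribute extra strata to the base change, and moreover principal opens of $\Spec B[M]$ would be attached to elements of $B[M]$ with nontrivial pre-addition, so one could not expect the affines in a basis to be of the form $\Spec B[N]$ for a monoid $N$. When $B$ is a blue field, every nonzero $b\in B$ is a unit and so $D(b\cdot m)=D(m)$, which collapses the principal open basis of $\Spec B[M]$ onto the monomial basis indexed by $M$ and gives the statement.
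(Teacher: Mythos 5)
Your proof is correct and follows essentially the same route as the paper: reduce to affine charts $\Spec B[M]$ and use that every nonzero element of a blue field is a unit, so that any localization of $B[M]$ only depends on the monomial part of the multiplicative set and is again of the form $B[N]$ for a localization $N$ of $M$. The paper states this directly as $S^{-1}B[N_j]=T^{-1}B[N_j]=B[T^{-1}N_j]$ with $T=S\cap\{1\cdot a\mid a\in N_j\}$, which covers arbitrary localizations at once and avoids your (slightly underjustified but correct) detour through the homeomorphism $\Spec B[M]\to\Spec\Fun[M]$.
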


\begin{proof}
 Let $U$ be an open subset of $X$ and $\{V_j\}$ an open affine covering with $\cO_X(V_j)=B[N_j]$. Then the intersections $U\cap V_j$ can be covered by subsets $W_{j,k}$ such that $\cO_X(W_{j,k})$ is a localization of $B[N_j]$. Since $B$ is a blue field, we have for every multiplicative subset $S$ of $B[N_j]$ that 
 \[
  S^{-1}B[N_j] \ = \ T^{-1}B[N_j] \ = \ B[T^{-1}N_j] \qquad \text{where}\qquad T \ = \ S\,\cap\, \{ \,1\cdot a\,|\, a\in N_j\,\}.
 \]
 Thus $U$ is covered by the $W_{j,k}$ and $\cO_X(W_{j,k})$ are monoid blueprints over $B$, which proves the proposition.
\end{proof}

\begin{ex}
 The proposition is not true over an arbitrary blueprint $B$ since the localizations of $B$ are in general not monoid blueprints. For instance consider the integers $B=\Z$. Then $\Spec\Z$ is a monoidal scheme over $\Z$, but every proper open subset is of the form $\Spec \Z[d^{-1}]$ for some integer $d\geq 2$, and $\Z[d^{-1}]$ is not a monoid blueprint over $\Z$ since we have
 \[
  \underbrace{d^{-1}+\dotsb+d^{-1}}_{d\text{ times}} \ \= \ 1.
 \]
 Note further that even if $B$ is a blue field, not every open subset $U$ of a monoidal scheme $X$ over $B$ satisfies that it is isomorphic to the spectrum of a monoid blueprint over $B$. For instance consider $X=\Spec(B\times B)=\Spec B\amalg\Spec B$, which is monoidal over $B$ since it is covered by two copies of $\Spec B$. However, $B\times B$ is not a monoid blueprint over $B$ since it contains the additive relation
 \[
  (1,0)+(0,1) \ \= \ (1,1).
 \]
\end{ex}


\section{Locally free sheaves}
\label{section: locally free sheaves}

\noindent
Let $B$ be a blueprint and $M$ a blue $B$-module.

\begin{df}
 A \emph{basis for $M$} is a subset $\beta$ of $M$ such that 
 \begin{enumerate}
  \item for all $m\in M$, there are elements $a_1,\dotsc,a_r\in B$ and pairwise distinct elements $b_1,\dotsc,b_r\in \beta$ such that
        \[
         m \ \= \ \sum_{i=1}^r \ a_i b_i.
        \]
  \item If 
        \[
         \sum_{i=r}^r \ a_i b_i \ \= \ \sum_{j=s}^r \ a'_j b'_j
        \]
        for $a_1,\dotsc,a_r,a'_1,\dotsc,a'_s\in B$ and pairwise distinct elements $b_1,\dotsc,b_r,b'_1,\dotsc,b'_s\in M$, then $a_1=\dotsb=a_r=a'_1=\dotsb=a'_s=0$.
 \end{enumerate}
 A blue $B$-module is \emph{freely generated} if it has a basis. A blue $B$-module is \emph{free} if is isomorphic to $\bigvee_{b\in\beta}B\cdot b$ for a subset $\beta$ of $B$.
\end{df}

Note that $\beta$ is a basis for the free blue module $\bigvee_{b\in\beta}B\cdot b$. Thus a free module is freely generated. The larger class of freely generated modules can be classified as follows.

\begin{lemma}
 Let $M$ be a blue $B$-module with basis $\beta$.
 \begin{enumerate}
  \item There is a unique isomorphism from $M$ onto a blue submodule of
        \[
         \bigoplus_{b\in\beta} \ B \cdot b \quad = \quad \bigl\{ \ (m_b)\in\prod_{b\in\beta}B \cdot b \ \bigl| \ m_b=0\text{ for all but finitely many }b\ \bigr\} 
        \]
        that maps $b\in\beta$ to $1\cdot b$. Conversely, any blue $B$-submodule of $\bigoplus_{b\in\beta}B\cdot b$ that contains $\beta$ is freely generated by $\beta$.
  \item Any two bases of $M$ have the same cardinality.
 \end{enumerate}
\end{lemma}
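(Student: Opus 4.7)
The plan for part (i) is to construct the desired embedding explicitly. Given $m\in M$, axiom (1) provides a presentation $m\equiv\sum_{i=1}^r a_ib_i$ with pairwise distinct $b_i\in\beta$, and I define $\phi(m)\in\bigoplus_{b\in\beta}B\cdot b$ as the tuple whose $b_i$-th coordinate is $a_i$ and which vanishes elsewhere. The delicate step is well-definedness: if $\sum a_ib_i\equiv\sum a'_jb'_j$ with each family pairwise distinct but with possible overlaps between them, I use that $B$ contains $-1$ to pass to $\sum a_ib_i+\sum(-a'_j)b'_j\equiv 0$, consolidate the coefficients sitting over the same basis element, and then invoke axiom (2) on the resulting relation, whose supports are pairwise distinct; this forces the matching coefficients to agree. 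Once $\phi$ is well-defined it is visibly $B$-linear, sends $b$ to $1\cdot b$, and is injective because any additive relation in its image pulls back to one in $M$ detected by axiom (2). Uniqueness of $\phi$ follows because the value on any $\sum a_ib_i$ is pinned down by $B$-linearity once the values on $\beta$ are prescribed.

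For the converse in (i), let $N$ be a blue $B$-submodule of $\bigoplus_{b\in\beta}B\cdot b$ containing $\beta$. Axiom (1) is automatic because every element of the ambient module is canonically $\sum a_b\cdot b$ and lies in $N$ by $B$-linearity of the submodule. For axiom (2), the pre-addition on $N$ is inherited from the ambient $\bigoplus_{b\in\beta}B\cdot b$, and there the $\beta$-support of each side of an additive relation must match componentwise; when the basis elements appearing are pairwise distinct, this forces each coefficient to be $0$, as required.

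For part (ii), my plan is to pin down $|\beta|$ as an intrinsic invariant of $M$. The primary approach is to forget the pre-addition on $B$ and view $M$ as a module over the multiplicative monoid $B^\bullet$: a basis of $M$ over $B$ is still a basis over $B^\bullet$ by direct inspection of the two axioms, and over a monoid the cardinality of a basis is the number of $B^\bullet$-orbits in a minimal generating set, an invariant of $M$ that does not depend on the chosen basis. An alternative route is to base change along a morphism $B\to k$ to a blue field $k$ (for instance a residue blueprint of a point of $\Spec B$), verify that the image of $\beta$ is still a basis of $M\otimes_B k$, and invoke the statement for blue $k$-vector spaces, which by part (i) reduces to classical dimension theory over the ring $k^+$. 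The main obstacle in the whole proof will be the well-definedness step in (i): axiom (2) speaks only of pairwise distinct families, so comparing two arbitrary expansions of a single element requires reconciling shared basis elements, and this is exactly the point where the assumption that $B$ contains $-1$ becomes essential.
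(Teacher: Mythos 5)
Your part (i) follows the same construction as the paper (send $m$ to its coordinate vector with respect to $\beta$), and you are right that well-definedness is the delicate point, but your fix is not available: this lemma sits in the section on locally free sheaves, where $B$ is an \emph{arbitrary} blueprint --- the hypothesis that $B$ contains $-1$ is only imposed later, for the cohomology comparison. So the passage from $\sum a_ib_i\=\sum a'_jb'_j$ to $\sum a_ib_i+\sum(-a'_j)b'_j\=0$ is not legitimate here. (The paper itself simply asserts uniqueness of the expansion and does not address overlapping supports either, so you have correctly located a soft spot; but a repair has to argue from axiom (2) without subtracting.) This part is a blemish rather than a fatal error.

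The serious gap is your primary argument for (ii). A basis of $M$ as a blue $B$-module is \emph{not} a basis of $M$ as a set with an action of the multiplicative monoid $B^\bullet$: axiom (1) only asserts that every $m$ is $\=$ to a sum $\sum a_ib_i$, possibly with $r>1$, and by the converse in (i) a freely generated module can be any submodule of $\bigoplus_{b\in\beta}B\cdot b$ containing $\beta$ --- for instance all of $\bigoplus_{b\in\beta}B\cdot b$, which contains elements such as $1\cdot b_1+1\cdot b_2$ that are not of the form $a\cdot b$. Hence forgetting the pre-addition does not exhibit $M$ as a free $B^\bullet$-set on $\beta$, and the orbit count of a minimal generating set need not equal $\#\beta$. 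The paper instead uses the embedding from (i) to identify $M^+_\Z$ with the free $B^+_\Z$-module on $\beta$ and invokes invariance of basis number for (commutative) rings. Your fallback via base change to a blue field $k$ is in that spirit, but it has its own holes: $k^+$ is only a semiring unless $k$ has $-1$ (e.g.\ $\Fun^+=\N$), so ``classical dimension theory over the ring $k^+$'' does not apply as stated, and the claim that $\beta$ remains a basis after base change is left unverified --- that verification is precisely what the paper's passage to $B^+_\Z$ carries out in one step.
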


\begin{proof}
 Since every $m\in M$ is a unique linear combination $m\=\sum m_b b$ of the basis elements $b\in\beta$ where all but finitely many $m_b\in B$ are $0$, the only possible morphism $\Phi:M\to\bigoplus_{b\in\beta}B\cdot b$ sends $m$ to $(m_b b)$. Since there are no additive relations between the different basis elements by \eqref{part2} of the definition of a basis, the map $\Phi$ is indeed a morphism of blue $B$-modules. It is clearly injective and thus defines an isomorphism onto its image. The latter claim of \eqref{part1} of the lemma is obvious.
 
 The embedding $\Phi:M\to \bigoplus_{b\in\beta}B\cdot b$ defines an isomorphism $\Phi^+:M^+\to \bigoplus_{b\in\beta}B^+\cdot b$ of blue $B^+$-modules, which have the same basis $\beta$. Thus we obtain an isomorphism $\Phi_\Z^+:M^+_\Z\to \bigoplus_{b\in\beta}B_\Z^+\cdot b$ of free $B_\Z^+$-modules with basis $\beta$. Since any two bases of a free module over a ring have the same cardinality, we obtain the same result for the freely generated blue $B$-module $M$.
\end{proof}

\begin{df}
 Let $M$ be a freely generated blue $B$-module with basis $\beta$. The \emph{rank $\rk_B M$ of $M$ over $B$} is defined as the cardinality of $\beta$.
\end{df}

Let $X$ be a blue scheme over $B$ and $\beta$ a (possibly infinite) set of cardinality $r$. In this part of the paper, a sheaf on $X$ is a sheaf on the small Zariski site of $X$.

\begin{df}
 A \emph{locally free sheaf of rank $r$} on $X$ is a sheaf $\cF$ on $X$ in blue $B$-modules that has an open affine covering $\{U_i\}_{i\in\cI}$ with the following properties:
 \begin{enumerate}
  \item\label{part1} if $B_i=\cO_X(U_i)$, then for every $i\in\cI$,
        \[
         \cF(U_i) \quad \simeq \quad \bigvee_{b\in\beta} B_i\cdot b\,;
        \]
  \item\label{part2} for every $i\in\cI$ and every open subset $V$ of $U_i$ with $\cO_X(V)=S_V^{-1}B_i$ for some multiplicative subset $S_V$ of $B_i$, there is an isomorphism $\cF(V)\simeq\bigvee_{b\in\beta} S_V^{-1}B_i\cdot b$ such that the restriction map 
        \[
         \res_{U_i,V}:\quad \bigvee_{b\in\beta} B_i\cdot b \quad \longrightarrow \quad \bigvee_{b\in\beta} S_V^{-1}B_i\cdot b
        \]
        corresponds to the localization of each component $B_i\cdot b$ at $S_V$.
 \end{enumerate}
 We call a covering $\{U_i\}$ of $X$ that satisfies properties \eqref{part1} and \eqref{part2} a \emph{trivialization of $\cF$}.
\end{df}

Note that the \emph{localizations} $V$ of the $U_i$, i.e.\ open subsets of the form $V=\Spec S_V^{-1} B_i$ for some multiplicative subset $S_V$ of $B_i$, form a basis for the topology of $X$. Thus a locally free sheaf $\cF$ is uniquely determined by a trivialization $\{U_i\}$ together with the restriction maps to subsets of the form $V=\Spec S_V^{-1} B_i$.

\begin{rem}
 There is an obvious notion of a quasi-coherent sheaf on $X$ (cf.\ \cite{CLS12} for the case of monoidal schemes). Property \eqref{part2} is automatically satisfied if $\cF$ is quasi-coherent. In other words, a quasi-coherent sheaf is locally free if and only if there are a set $\beta$ and an open affine covering $\{U_i\}$ of $X$ such that $\cF(U_i) \simeq \bigvee_{b\in\beta} B_i\cdot b$ for all $i$.
\end{rem}

\begin{ex}
 The sheaf $\bigvee_{b\in\beta}\cO_X$ that sends an open subset $U$ of $X$ to $\bigvee_{b\in\beta}\cO_X(U)$, together with the obvious restriction maps, is locally free of rank $r=\#\beta$. It is called the \emph{trivial locally free sheaf of rank $r$}.
\end{ex}

We construct the base extension of a locally free sheaf to rings. Let $\cF$ be a locally free sheaf on $X$ of rank $r$ and $\cU$ the family of all open affine subsets of $X$ such that $\cF(U)\simeq\bigvee_{b\in\beta} \cO_X(U)\cdot b$ together with all inclusion maps. By properties \eqref{part1} and \eqref{part2}, $X$ is the colimit of $\cU$. If $\cU_\Z^+$ denotes the family of all $U_\Z^+$ for $U$ in $\cU$ and all inclusion maps $U_\Z^+\to V_\Z^+$ whenever $U\to V$ is in $\cU$, then $X_\Z^+$ is the colimit of $\cU_\Z^+$.

We define $\cF_\Z^+(U_\Z^+)=\bigl(\cF(U)\bigr)_\Z^+$ for all $U$ in $\cU$ and we obtain restriction morphisms $\res:\cF(U)_\Z^+\to\cF(V)_\Z^+$ for every inclusion $V\to U$ in $\cU$. Since localizations commute with base extensions to rings, i.e.\ $(S^{-1}B)^+_\Z =S^{-1}(B_\Z^+)$, the values $\cF_\Z^+(U_\Z^+)$ for $U$ in $\cU$ glue together to a uniquely determined sheaf $\cF_\Z^+$ on $X_\Z^+$. Since
\[
 \Bigl( \ \bigvee_{b\in\beta} B_U \cdot b \Bigr)^+_\Z \quad = \quad \bigoplus_{b\in\beta} \ B_{U,\Z}^+ \cdot b,
\]
the sheaf $\cF_\Z^+$ is locally free on $X$ as a sheaf with values in $B_\Z^+$-modules.


\section{{\v C}ech cohomology of monoidal schemes}
\label{section: cech cohomology of monoidal schemes}

\noindent
In this section, we prove the comparison result for the cohomology of locally free sheaves on monoidal schemes with the cohomology of its base extension to rings.

Let $B$ be a blueprint with $-1$ and $X$ a monoidal scheme over $B$. Let $\beta$ be a set of cardinality $r$ and $\cF$ a locally free sheaf of rank $r$ on $X$. A trivialization $\cU=\{U_i\}_{i\in\cI}$ of $\cF$ is \emph{finite} if $\cI$ is a finite set. It is \emph{monoidal} if the coordinate blueprints $B_i=\cO_X(U_i)$ are monoid blueprints of the form $B[M_i]$ over $B$. 

We employ the notation from Part \ref{part: cech cohomology} of the paper. We assume that $\cI$ is totally ordered and denote by $\cI_l$ the set of cardinality $l+1$-subsets $I$ of $\cI$, which inherits an ordering from $\cI$. We write $I=(i_0,\dotsc,i_l)$ if $I=\{i_0,\dotsc,i_l\}$ and $i_0<\dotsb<i_l$. For $I\in\cI_l$, we define 
\[
 U_I \ = \ \bigcap_{i\in I} \ U_{i} \ , \qquad B_I \ = \ \cO_X(U_I) \qquad \text{and} \qquad \cF_I \ = \ \cF(U_I).
\]
Let $\cC^l=\prod_{I\in\cI_l}\cF_I$ and $\cC^\bullet=\cC^\bullet(X,\cF;\cU)$ the \v Cech complex of $X$ w.r.t.\ $\cU$ and values in $\cF$. We denote the coboundary maps as usual by $\partial_k^{(l)}:\cC^{l-1}\to\cC^l$. We state the following hypothesis on $X$ and $\cU=\{U_i\}_{i\in\cI}$.

\medskip\noindent\textbf{Hypothesis (H):} For all finite subsets $J\subset I$ of $\cI$, the restriction map
             \[
              \res_{U_J,U_I}:\cO_X(U_J)\to\cO_X(U_I)
             \]
             is injective.

\begin{rem}
Recall that a blueprint $B$ is \emph{integral} if every non-zero element $a\in B$ acts injectively on $B$ by multiplication. A blue scheme is \emph{integral} if the coordinate blueprint of every open affine subscheme $U$ of $X$ is integral. If $X$ is integral, then (H) is satisfied for all open affine coverings $\cU$ of $X$.
\end{rem}

Since $B$ is with $-1$, we have that $B^+$ is a ring, i.e.\ $B^+=B^+_\Z$. Similarly, $X^+=X^+_\Z$ is a scheme over $B^+$ and $\cF^+$ is a locally free sheaf on $X^+$ in $B^+$-modules. Defining $\cU^+$ as the collection of all affine opens $U_i^+$ of $X^+$, we obtain a trivialization of $\cF^+$ and can form the \v Cech complex $\cC^\bullet(X^+,\cF^+;\cU^+)$ of $X^+$ w.r.t.\ $\cU^+$ and values in $\cF^+$. Then the subsets $\cZ^l(X^+,\cF^+;\cU^+)$ and $\cB^l(X^+,\cF^+;\cU^+)$ are $B^+$-modules, and so is $H^l(X^+,\cF^+;\cU^+)$.

There is a canonical morphism $\cC^\bullet(X,\cF;\cU) \longrightarrow \cC^\bullet(X^+,\cF^+;\cU^+)$ of cosimplicial blue $B$-modules, which is injective in each degree since all blue $B$-modules $\cC^l(X,\cF;\cU)$ are with $-1$. Thus we can consider $\cZ^l(X,\cF;\cU)$ as a subset of $\cZ^l(X^+,\cF^+;\cU^+)$ and $\cB^l(X,\cF;\cU)$ as a subset of $\cB^l(X^+,\cF^+;\cU^+)$ for every $l\geq 0$. This induces a morphism 
\[
 H^l(X,\cF;\cU) \quad \longrightarrow \quad H^l(X^+,\cF^+;\cU^+)
\]
of blue $B$-modules.

\begin{thm}\label{thm: comparison for a fixed covering}
 Given a monoidal scheme $X$ over $B$, a locally free sheaf $\cF$ on $X$ and a finite monoidal trivialization $\cU=\{U_i\}_{i\in\cI}$ of $\cF$ that satisfies Hypothesis (H). Then 
 \[
  \cZ^l(X,\cF,\cU)^+ \ = \ \cZ^l(X^+,\cF^+,\cU^+) \qquad \text{and} \qquad \cB^l(X,\cF,\cU)^+ \ = \ \cB^l(X^+,\cF^+,\cU^+) 
 \]
 for every $l\geq0$. Consequently, we have
 \[
  H^l(X,\cF,\cU)^+ \ = \ H^l(X^+,\cF^+,\cU^+).
 \]
\end{thm}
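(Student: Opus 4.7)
The strategy is to identify the cochain complexes via $(-)^+$, reduce to a monomial-indexed componentwise analysis using the monoidal structure and Hypothesis (H), and then run a per-monomial argument exploiting the additive inverse $-1 \in B$ and the $\{0,\pm 1\}$-integer nature of the coboundary.

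I would first verify that $(-)^+$ commutes with the finite products defining $\cC^l$ and sends $\cF(U_I) = \bigvee_{b \in \beta} B_I \cdot b$ to $\bigoplus_{b \in \beta} B_I^+ \cdot b = \cF^+(U_I^+)$, so that $\cC^l(X,\cF;\cU)^+ = \cC^l(X^+,\cF^+;\cU^+)$ as cosimplicial $B^+$-modules with matching coboundary; then $\cZ^l(X,\cF;\cU)^+$ is the $B^+$-submodule of $\cC^l(X^+,\cF^+;\cU^+)$ generated by $\cZ^l(X,\cF;\cU) = \cC^l \cap \cZ^l(X^+,\cF^+;\cU^+)$. Splitting $\cF_I$ over $\beta$ reduces the problem to $\cF = \cO_X$. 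By Corollary \ref{cor: intersections of opens in monoidal schemes} each $B_I = B[N_I]$ is a monoid blueprint over $B$; Hypothesis (H) forces $B[N_{I^k}] \hookrightarrow B[N_I]$ to be injective, hence so is the underlying monoid inclusion $N_{I^k} \hookrightarrow N_I$. These embeddings being compatible, I may view all $N_I$ as submonoids of a common ambient monoid and obtain $\cC^{l,+} = \bigoplus_{(I,n)} B^+ \cdot (I,n)$, with the coboundary a $B^+$-linear map whose matrix has $\{0,\pm 1\}$-integer entries and preserves the monomial $n$.

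The cocycle condition consequently decouples across monomials, and both $\cZ^l$ assertions split accordingly. The theorem reduces to a per-monomial statement: for each $n$ with upward closed support $S_n = \{I : n \in N_I\} \subseteq 2^\cI$, every $B^+$-solution $(c_I)_{I \in S_n \cap \cI_l}$ of the restricted system $\sum_{k:\, n \in N_{J^k}} (-1)^k c_{J^k} = 0$ is a $B^+$-linear combination of $B$-solutions $(b_I)$ with $b_I \in B$ (and the same relation as a pre-addition in $B$). Denoting this coefficient matrix by $A$ (with $\{0,\pm 1\}$-integer entries): $\im A$ is torsion-free as a subgroup of a free $\Z$-module, and one shows that $\coker A$ is likewise torsion-free using the monomial-graded description of cohomology of locally free sheaves on monoidal schemes. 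A short exact sequence then gives $\ker A_{B^+} = \ker A_\Z \otimes_\Z B^+$, and $\ker A_\Z$ is generated over $\Z$ by $\{0,\pm 1\}$-integer cocycles (explicitly, the coboundaries $\partial(\delta_{I_0})$ for $I_0 \in S_n \cap \cI_{l-1}$ together with, when $\tilde H^{l-1}(S_n^c;\Z)$ is nonzero, a single combinatorial generator).

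Any $\{0,\pm 1\}$-integer cocycle automatically lies in $\cZ^l(X,\cF;\cU)$: the pre-addition relation $\sum \pm 1 \equiv 0$ in $B$ follows from repeated applications of $1 + (-1) \equiv 0$ afforded by $-1 \in B$. This yields $\cZ^l(X^+,\cF^+;\cU^+) \subseteq \cZ^l(X,\cF;\cU)^+$, with the reverse inclusion immediate from $B^+$-linearity of the coboundary. For coboundaries the argument is more direct: any $B^+$-coboundary $\partial(y)$ with $y \in \cC^{l-1,+}$ decomposes along the monomial basis of $y$ as $\sum_\mu \lambda_\mu \partial(y_\mu)$ with each $y_\mu \in \cC^{l-1}$ a single monomial, so $\partial(y_\mu) \in \cB^l(X,\cF;\cU)$ and hence $\partial(y) \in \cB^l(X,\cF;\cU)^+$. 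The cohomology identity follows by passing to quotients. The main technical obstacle is verifying that $\coker A$ is torsion-free and that $\ker A_\Z$ is generated by $\{0,\pm 1\}$-valued cocycles --- both of which rest on the specific combinatorial structure of the support system $S_n$ dictated by the monoidal scheme.
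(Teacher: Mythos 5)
Your setup --- identifying $\cC^\bullet(X,\cF;\cU)^+$ with $\cC^\bullet(X^+,\cF^+;\cU^+)$, splitting over $\beta$, using Hypothesis (H) to embed all the $N_I$ into a common ambient monoid, and decomposing everything monomial by monomial --- is sound, and your treatment of coboundaries (generators with a single non-trivial component, whose images land in $\cC^l$ and are blueprint-coboundaries) is essentially identical to the paper's Lemma \ref{lemma: base extension of coboundary blueprints}. The gap is in the cocycle half. Everything there rests on the two claims you defer to the end: that $\coker A$ is torsion-free and that $\ker A_\Z$ is generated by $\{0,\pm 1\}$-valued cocycles. Neither follows from $S_n$ being upward closed: the per-monomial complex is the subcomplex of the full ordered \v Cech complex of the simplex on $\cI$ spanned by $S_n$, and its integral cohomology is identified (up to a degree shift, via the long exact sequence of the pair) with the simplicial cohomology of the downward-closed family $T_n=\{I : n\notin N_I\}$. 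A simplicial complex can have torsion in its integral cohomology (e.g.\ a triangulated $\mathbb{RP}^2$), in which case $\ker A_{B^+}\neq \ker A_\Z\otimes_\Z B^+$ and your argument breaks; and even absent torsion, the proposed generating set (elementary coboundaries plus ``a single combinatorial generator'') cannot be right in general since the relevant cohomology group can have arbitrary rank. You acknowledge that one must use ``the specific combinatorial structure of $S_n$ dictated by the monoidal scheme,'' but you identify no such structure and give no argument --- and this is precisely where the content of the theorem lies, not a technicality to be checked at the end.

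The paper's Lemma \ref{lemma: base extension of cocycle blueprints} circumvents exactly this point. Using Hypothesis (H) it embeds all the $B_I$ into the colimit $B_\eta$, observes that the resulting complex is the \v Cech complex of the affine scheme $\Spec B_\eta^+$ with respect to the covering by $\#\cI$ copies of itself, so that cocycles equal coboundaries in positive degrees by affine vanishing (per monomial: the full simplex complex is exact in positive degrees over any ring, so no torsion or generation issue ever arises), reduces to the coboundary lemma over $B_\eta$, and then intersects back with $\cC^l(X,\cF;\cU)$. If you want to retain your per-monomial formulation, the fix is to replace your two integral claims by this observation: a $B^+$-cocycle supported on $S_n$ is a cocycle of the full simplex complex, hence a coboundary there, hence a $B^+$-combination of single-component coboundaries, and only the compatibility of this with the support condition remains to be checked. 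As written, your proof is incomplete at its central step.
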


\begin{proof}
We will establish the following two lemmas in order to prove Theorem \ref{thm: comparison for a fixed covering}. In the proofs of these lemmas, we will make use of the usual \v Cech chain complex
\begin{multline*}
  \cC^0(X^+,\cF^+;\cU^+) \quad \stackrel{d^1}\longrightarrow \quad \cC^1(X^+,\cF^+;\cU^+) \quad \longrightarrow \quad \dotsb \\ 
  \dotsb \quad \longrightarrow \quad \cC^{l-1}(X^+,\cF^+;\cU^+) \quad \stackrel{d^l}\longrightarrow \quad \cC^l(X^+,\cF^+;\cU^+) \quad \longrightarrow \quad \dotsb
\end{multline*}
where the differentials $d^l=\sum_{i=0}^l (-1)^i \partial_k^{(l)}$ are the alternating sums of the respective restriction maps.

\begin{lemma}\label{lemma: base extension of coboundary blueprints}
 $\cB^l(X,\cF,\cU)^+ \ = \ \cB^l(X^+,\cF^+,\cU^+)$.
\end{lemma}

\begin{proof}
 Since $\cU$ is finite, a set of generators for $\cB^l(X^+,\cF^+,\cU^+)$ is given by the images of the vectors $x_{a,b,J}=(0,\dotsc,0,a\cdot b,0,\dotsc,0)$ with $a\in B_J$ and $b\in\beta$. The image of such a vector is of the form $(d^l(x_{a,b,J})_I)_{I\in\cI_l}$. Since $x_{a,b,J}$ has only one non-trivial component, we have $d^l(x_{a,b,J})_I=\partial_k^{(l)}(x_{a,b,J})_I$ for some $k$. Therefore, the image of $x$ in $\cC^{l,+}$ is contained in $\cC^l$. Since $\cB^l(X,\cF,\cU)=\cB^l(X^+,\cF^+,\cU^+)\cap\cC^l$, the lemma follows. 
\end{proof}

\begin{lemma}\label{lemma: base extension of cocycle blueprints}
 $\cZ^l(X,\cF,\cU)^+ \ = \ \cZ^l(X^+,\cF^+,\cU^+)$.
\end{lemma}

\begin{proof}
 Let $B_\eta=\colim B_I$ be the colimit of the blueprints $B_I$ for finite subsets $I$ of $\cI$. By Hypothesis (H), the canonical inclusions $B_I\to B_\eta$ are injective for all finite $I\subset\cI$. Since $\partial_k^{(l)}$ extends to a $B_{I^k}^+$-linear map
 \[
  \partial_k^{(l),+}: \quad \cF_{I^k}^+ \ \simeq \ \bigoplus_{b\in\beta} \ B_{I^k}^+\cdot b \quad \longrightarrow \quad \bigoplus_{b\in\beta} \ B_I^+\cdot b \ \simeq \ \cF_I^+,
 \]
 the \v Cech chain complex 
 \[
  \cC^0(X^+,\cF^+;\cU^+) \quad \stackrel{d^1}\longrightarrow \quad \cC^1(X^+,\cF^+;\cU^+) \quad \longrightarrow \quad \dotsb
 \]
 with $\cC^l(X^+,\cF^+;\cU^+)=\prod_{i\in\cI_l}\bigoplus_{b\in\beta}B_I^+\cdot b$ defines a chain complex
 \[
  \cC^{0,+}_\eta \quad \stackrel{d^1}\longrightarrow \quad \cC^{1,+}_\eta \quad \longrightarrow \quad \dotsb
 \]
 with $\cC^{l,+}_\eta=\prod_{i\in\cI_l}\bigoplus_{b\in\beta}B_\eta^+\cdot b$. This chain complex is the \v Cech chain complex of the affine scheme $X_\eta^+=\Spec B_\eta^+$ w.r.t.\ the covering $\cU_\eta^+=\{U_{i,\eta}^+\}_{i\in\cI}$ where $U_{i,\eta}^+=X_\eta^+$ and with values in the locally free sheaf $\cF_\eta^+$ associated to the $B_\eta^+$-module $F_\eta^+=\colim \cF^+(U_I^+)$ that is the colimit over all finite $I\subset\cI$.

  Since the cohomology of coherent sheaves on affine schemes is concentrated in degree $0$, we have 
 \begin{align*}
  \cZ^0(X_\eta^+,\cF^+_\eta;\cU_\eta^+) \quad &= \quad F_\eta^+                              &&\text{and}       \\
  \cZ^l(X_\eta^+,\cF^+_\eta;\cU_\eta^+) \quad &= \quad \cB^l(X_\eta^+,\cF^+_\eta;\cU_\eta^+) &&\text{for}\quad l>0.
 \end{align*}

 Since $F_\eta^+=\bigl(\bigvee_{b\in\beta} B_\eta\cdot b\bigl)^+$ is generated by elements in $\cZ^0(X,\cF;\cU)=\bigvee_{b\in\beta} B\cdot b$ as a blue $B_\eta$-module, the claim of the lemma follows for $l=0$. 
 
 For $l>0$, we can apply Lemma \ref{lemma: base extension of coboundary blueprints} to $X_\eta=\Spec B_\eta$, the locally free sheaf associated with $F_\eta=\colim \cO_X(U_I)$ and $\cU_\eta=\{U_{i,\eta}\}_{i\in\cI}$ with $U_{i,\eta}=X_\eta$ and get
 \[
   \cB^l(X_\eta^+,\cF^+_\eta;\cU_\eta^+) \quad = \quad \cB^l(X_\eta,\cF_\eta;\cU_\eta)^+. 
 \]
 Since
 \[
  \cB^l(X_\eta,\cF_\eta;\cU_\eta) \quad \subset \quad \cZ^l(X_\eta,\cF_\eta;\cU_\eta) \quad \subset \quad \cZ^l(X_\eta^+,\cF^+_\eta;\cU_\eta^+),
 \]
 we conclude that $\cZ^l(X_\eta,\cF_\eta;\cU_\eta)^+=\cZ^l(X_\eta^+,\cF^+_\eta;\cU_\eta^+)$. Therefore 
 \begin{align*}
  \cZ^l(X,\cF;\cU)^+ \quad &= \quad \Bigl( \ \cC^l(X,\cF;\cU) \quad \cap \quad \cZ^l(X_\eta,\cF_\eta;\cU_\eta) \ \Bigr)^+ \\
                     \quad &= \quad \cC^l(X,\cF;\cU)^+ \quad \cap \quad \cZ^l(X_\eta,\cF_\eta;\cU_\eta)^+                \\
                     \quad &= \quad \cC^l(X^+,\cF^+;\cU^+) \quad \cap \quad \cZ^l(X_\eta^+,\cF_\eta^+;\cU_\eta^+)  \\
                     \quad &= \quad \cZ^l(X^+,\cF^+;\cU^+)
 \end{align*}
 as desired.
\end{proof}

Since taking quotients commutes with the base extension to rings, we have that
\begin{multline*}
 H^l(X,\cF;\cU)^+ \ = \ \cZ^l(X,\cF;\cU)^+ / \cB^l(X,\cF;\cU)^+ \ = \\ \cZ^l(X^+,\cF^+;\cU^+) / \cB^l(X^+,\cF^+;\cU^+) \ = \ H^l(X^+,\cF^+;\cU^+),
\end{multline*}
which proves Theorem \ref{thm: comparison for a fixed covering}.
\end{proof}

\begin{thm}\label{thm: comparison of cech cohomology}
 Given a monoidal scheme $X$ over $B$ that admits a finite covering $\{U_i\}$ with Hypothesis (H) such that $\cO_X(U_i)$ are monoid blueprints over $B$. Then we have for every locally free sheaf $\cF$ on $X$ that
 \[
  H^l(X,\cF)^+ \ = \ H^l(X^+,\cF^+).
 \]
\end{thm}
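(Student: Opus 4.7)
My plan is to bootstrap Theorem \ref{thm: comparison for a fixed covering} to the colimit defining $H^l(X,\cF)$, by showing that the system of finite monoidal trivializations of $\cF$ satisfying Hypothesis (H) is cofinal among all covering families of $X$ that appear in that colimit, and then passing $(-)^+$ through the colimit and matching with the scheme-theoretic side.

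The first task is to establish cofinality: given any covering $\cW$ of $X$, I construct a refinement $\cV$ that is finite, consists of opens of the form $\Spec B[N_j]$, trivializes $\cF$, and satisfies (H). To do so, I combine $\cW$ with the distinguished covering $\{U_i\}_{i\in\cI}$ from the hypothesis, so that each intersection $W_\alpha \cap U_i$ sits inside $U_i \cong \Spec B[M_i]$. By clause (ii) of the definition of a locally free sheaf, $\cF$ is automatically free on every monoidal localization $\Spec B[M_i[m^{-1}]]$ of $U_i$; these basic monoidal opens refine any cover of $U_i$, and by quasi-compactness of $U_i$ together with finiteness of $\cI$ only finitely many are needed. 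Hypothesis (H) transfers to $\cV$ because the restriction map on an intersection of the $V_j$'s factors as the corresponding restriction on an intersection of the $U_i$'s (which is injective by assumption) followed by further monoidal localizations, which remain injective.

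Given any such $\cV$, Theorem \ref{thm: comparison for a fixed covering} yields $H^l(X,\cF;\cV)^+ = H^l(X^+,\cF^+;\cV^+)$. Since the base extension $(-)^+$ from blue $B$-modules to $B^+$-modules is a left adjoint and hence commutes with filtered colimits, and since a colimit may be computed on any cofinal subsystem, one obtains
\[
 H^l(X,\cF)^+ \ = \ \colim_{\cV} \ H^l(X,\cF;\cV)^+ \ = \ \colim_{\cV} \ H^l(X^+,\cF^+;\cV^+).
\]
On the scheme side, each $\cV^+$ is a finite affine open covering of $X^+$ that trivializes the locally free sheaf $\cF^+$, so standard results on \v Cech cohomology (an acyclic cover computes derived cohomology for a quasi-coherent sheaf on a scheme) give $H^l(X^+,\cF^+;\cV^+) = H^l(X^+,\cF^+)$ for every such $\cV$. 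The colimit on the right therefore collapses to $H^l(X^+,\cF^+)$, and the theorem follows.

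The main obstacle is the refinement step. Localizing $B[M_i]$ at a general element $b\cdot m$ inverts $b\in B$ and so need not remain a monoid blueprint over $B$ (cf.\ the example of $\Z[1/d]$ preceding Proposition \ref{prop: monoidal schemes over blue fields}). One must therefore trivialize $\cF$ by localizations in the monoid direction only, i.e.\ at elements of $M_i$; the availability of such trivializations is precisely what clause (ii) of the locally free sheaf definition guarantees, which is why the refinement keeps us within the class of coverings to which Theorem \ref{thm: comparison for a fixed covering} applies.
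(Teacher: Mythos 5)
Your proposal is correct and follows essentially the same strategy as the paper: show that the finite monoidal trivializations of $\cF$ satisfying Hypothesis (H) are cofinal among all coverings, apply Theorem \ref{thm: comparison for a fixed covering} to each, and commute $(-)^+$ with the resulting filtered colimit. The only divergences are cosmetic --- you spell out the refinement construction that the paper merely asserts, and on the scheme side you identify each $H^l(X^+,\cF^+;\cV^+)$ with $H^l(X^+,\cF^+)$ via acyclicity of affine covers, whereas the paper simply observes that the same cofinal system computes the colimit defining $H^l(X^+,\cF^+)$.
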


\begin{proof}
 Let $\cU$ be a covering of $X$ with Hypothesis (H) and $\cF$ a locally free sheaf on $X$. Then there is a finite refinement $\cV$ of $\cU$ that satisfies all conditions of Theorem \ref{thm: comparison for a fixed covering}. Since we can choose $\cU$ itself arbitrary fine, the coverings $\cV$ that satisfy the hypotheses of Theorem \ref{thm: comparison for a fixed covering} form a cofinal system in the category of all finite coverings of $X$ together with refinements. Since $X$ is quasi-compact, the $\cV$ are cofinal in the category of all coverings of $X$.

 Therefore the colimit of the cohomology blueprints $H^l(X,\cF;\cV)$ over all coverings $\cV$ that satisfy Theorem \ref{thm: comparison for a fixed covering} equals $H^l(X,\cF)$. For the same reasons, the colimit of the cohomology groups $H^l(X^+,\cF^+,\cV^+)$ over all such $\cV$ equals $H^l(X^+,\cF^+)$. Since $(-)^+$ commutes with filtered colimits, this establishes the claim of the theorem. 
\end{proof}

\begin{ex}[Line bundles on projective space] 
 Let $B$ be a blueprint with $-1$ and $\cO(d)$ the twisted sheaf on $\P^n_B$. If $d\geq0$, then the cohomology $H^\ast(\P^{n,+}_B,\cO(d)^+)$ is concentrated in degree $0$. Therefore $H^0(\P^n_B,\cO(d))$ is the only non-trivial cohomology of $\P^n_B$ with values in $\cO(d)$. It is clear that $H^0(\P^n_B,\cO(d))$ equals the blue $B$-module of global sections of $\cO(d)$, which is a free $B$-module of rank $\rk\; H^0(\P^{n,+}_B,\cO(d)^+)$.

 For $-n\leq d\leq -1$, the cohomology $H^\ast(\P^n_B,\cO(d))$ is trivial. If $d\leq -n-1$, then the cohomology $H^\ast(\P^{n,+}_B,\cO(d)^+)$ is concentrated in degree $n$. Therefore $H^n(\P^n_B,\cO(d))$ is the only non-trivial cohomology of $X$ with values in $\cO(d)$. If $\cU=\{U_i\}_{i\in\cI}$ is the canonical atlas of $\P^n_B$, then we have $H^n(\P^n_B,\cO(d))=H^n(\P^n_B,\cO(d);\cU)$ by comparison with the compatible situation for $\P^{n,+}_B$ and the canonical covering $\cU^+$. Therefore we have $\cZ^l(\P^n_B,\cO(d))=\cO(d)(U_\cI)$, and $\cB^l(\P^n_B,\cO(d))$ is generated by the images $d(x_{a,b,J})\in\cO(d)(U_\cI)$ (cf.\ the proof of Lemma \ref{lemma: base extension of coboundary blueprints}). Therefore $H^n(\P^n_B,\cO(d))$ is a free blue $B$-module of rank $\rk\; H^n(\P^{n,+}_B,\cO(d)^+)$.
\end{ex}

Also in more complicated examples, we found that the cohomology blueprints are free over the base blueprint. Therefore we pose the following problem.

\begin{question*}
 Let $B$ be a blueprint with $-1$ and $X$ a quasi-compact monoidal scheme over $B$ that admits an open affine covering satisfying Hypothesis (H). Is it true that $H^l(X,\cF)$ is a free blue $B$-module for every locally free sheaf $\cF$?
\end{question*}

\begin{rem}[Sheaf cohomology for toric varieties] \label{rem: cohomology for toric varieties}
 We conlcude this text with the following remark on possible applications to the computation of sheaf cohomology for toric varieties.
 
 Every toric variety $\cX$ over the ring $B^+$ admits a monoidal model $X$ over $B$, i.e.\ a monoidal scheme $X$ over $B$ such that $\cX\simeq X^+$ as a $B^+$-scheme. The maximal open affine covering of $X$ satisfies Hypothesis (H) since the restriction maps correpond to inclusions of subsemigroups of the ambient character lattice of the toric variety.
 
 Since the \v Cech cohomology for monoidal schemes is amenable to explicit calculation due to their rigid structure, Theorem \ref{thm: comparison of cech cohomology} yields an application for calculations of sheaf cohomology over toric varieties.
 
 The drawback is, however, that only a very limited class of locally free sheaves over toric varieties can be defined over a monoidal model. Namely, the rigid structure of the wedge product implies that every locally free sheaf $\cF$ on a monoidal scheme $X$ over a blueprint $B$ decomposes into the wedge product $\bigvee \cL_i$ of line bundles.
 
 This means that the only locally free sheaves of toric varieties for which our methods apply are (direct sums of) line bundles. There exists an algorithm to calculate the cohomology of toric line bundles, as conjectured in \cite{BJRR10} and proven independently in \cite{Roschy-Rahn10} and \cite{Jow11}. The method of this algorithm seems to be quite different from the perspective of our text, but it would be interesting to understand the precise relationship. 
\end{rem}


\appendix

\section{Cohomology of {$\P^1$} via injective resolutions}
\label{appendix: cohomology of p1}

\noindent
In this section, we mimic the methods of homological algebra and injective resolutions to calculate the cohomology $H^i_\hom(X,\cO_X)$ of the projective line $X=\P^1_\Fun$. While $H_\hom^0(X,\cO_X)$ equals the global sections of $\cO_X$, it turns out that $H_\hom^1(X,\cO_X)$ is of infinite rank over $\Fun$. 

Note that the following calculations apply also to the projective line over $\Funsq$, which shows that $H^i_\hom(X,\cF)$ differs from the cohomology blueprints $H^i(X,\cF)$, as considered in the main text of this paper.

Deitmar has given in \cite{Deitmar11b} a rigorous treatment of cohomology via injective resolutions for sheaves in so called \emph{belian} categories. This applies, in particular, to sheaves on $\P^1_\Fun$ in pointed $\Fun$-modules (also known as pointed $\Fun$-sets). Note that the general hypotheses of \cite{Deitmar11b} are not satisfied by the category of blue $B$-modules.

To emphasize that we abandon any additive structure in the discussion that follows, we avoid mentioning blueprints, but employ the language of monoids and monoidal schemes.

Let $A=\Fun[T]$ be the coordinate monoid of $\A^1_\Fun$. All of the $A$-modules in the following are pointed $A$-modules (following the terminology of \cite{Deitmar11b}), and we denote the base point generally by $\ast$. Let $F=\{T^i\}_{i\geq0}\cup\{\ast\}$ be the free module over $A$ of rank $1$, $I=\{T^i\}_{i\in\Z}\cup\{\ast\}$ and $J=\{T^i\}_{i<0}\cup\{\ast\}$. Then both $I$ and $J$ are injective $A$-modules. Let $G=\Fun[T^{\pm1}]$ be the ``quotient monoid'' of $A$. Then the corresponding localizations of $I$ and $J$ are $I$ itself resp.\ $0=\{\ast\}$, which are both injective $G$-modules.

The topological space of $X=\P^1_\Fun$ has three points; namely, two closed points $x_1,x_2$ and one generic point $x_0$. It can be covered by two opens $U_i=\{x_0,x_i\}$ ($i=1,2$), which are both isomorphic to $\A^1_\Fun$ and which intersect in $U_0=\{x_0\}$. The coordinate monoids of these opens are respectively $\cO_X(U_1)\simeq \cO_X(U_2)\simeq A$ and $\cO_X(U_0)\simeq G$, where $\cO_X$ is the structure sheaf of $X$.

We define the injective sheaf $\cI_0$ over $X$ by $\cI_0(U_i)=I$ for $i=0,1,2$ together with the identity maps $\id:I\to I$ as restriction maps. We define the injective sheaf $\cI_1$ over $X$ by $\cI_0(U_i)=J$ for $i=1,2$ and $I_1(U_0)=0$ together with the trivial maps $0:J\to 0$ as restriction maps.

It is easily seen that the structure sheaf $\cO_X$ of $X$ has an injective resolution of the form
\[
 0 \ \longrightarrow \ \cO_X \ \longrightarrow \ \cI_0 \ \longrightarrow \ \cI_1 \ \longrightarrow \ 0.
\]
Taking stalks at $x_1$ or at $x_2$ yields the exact sequence 
\[
 0 \ \longrightarrow \ F \ \longrightarrow \ I \ \longrightarrow \ J \ \longrightarrow \ 0
\]
of $A$-modules. Talking stalks at $x_0$ yields the exact sequence
\[
 0 \ \longrightarrow \ I \ \longrightarrow \ I \ \longrightarrow \ 0 \ \longrightarrow \ 0
\]
of $H$-modules. 

The next step is to apply $\Hom(\cO_X,-)$ to the given injective resolution of $\cO_X$. A morphism $\varphi:\cO_X\to \cI_0$ is determined by the image of $\varphi_{x_0}(T^0)\in \cI_{1,x_0}=I$ of $T^0\in \cI_{0,x_0}=I$. Thus $\Hom(\cO_X,\cI_0)\simeq I$ (as $A$-module, or even $H$-module). 

A morphism $\psi:\cO_X\to \cI_1$ is given by two $A$-module maps \[\psi_i: \cO_{X,x_i}=F \to J=\cI_{1,x_i}\] ($i=1,2$), which do not have to satisfy any relation since the restriction maps of $\cI_1$ are trivial. Thus $\Hom(\cO_X,\cI_1)\simeq J\times J$ (as $A$-module). Note that, a priori, these homomorphism sets are merely $\Fun$-modules, but the richer structure as $A$-modules makes it easier to study the induced morphism $\Phi:\Hom(\cO_X,\cI_0)\to \Hom(\cO_X,\cI_1)$, which is the only non-trivial map in the complex
\[
 0 \ \longrightarrow \ \Hom(\cO_X,\cI_0) \ \stackrel\Phi\longrightarrow \ \Hom(\cO_X,\cI_1) \ \longrightarrow \ 0.
\]
We define the cohomology groups $H_\hom^i(\P^1_\Fun,\cO_X)$ as the cohomology groups of this complex.

The kernel of $\Phi$ consists of the trivial morphism and the morphism $\varphi:\cO_X\to\cI_0$ that is characterized by $\varphi_{x_0}(T^0)=T^0$. Thus $H_\hom^0(\P^1_\Fun,\cO_X)=\{\ast,\varphi\}$ is an $1$-dimensional $\Fun$-vector space, in accordance with the analogous result for sheaf cohomology of $\P^1$ over a ring. 

The image of $\Phi$ are morphisms $\psi:\cO_X\to\cI_1$ such that either $\psi_1$ or $\psi_2$ is trivial. Thus $\im\Phi=J\vee J\subset J\times J$ (as $A$-modules). Consequently $H_\hom^1(\P^1,\cO_X)=(J\times J)/(J\vee J)$ is an infinite-dimensional $\Fun$-vector space. This result is not at all in coherence with the situation over a ring where $H_\hom^1(\P^1,\cO_X)=0$.

\begin{rem}
 The above calculation can also be used to calculate $H_\hom^i(\P^1,\cO(n))$ for the twists $\cO(n)$ of the structure sheaf, which yields the expected outcome for $H_\hom^0$, namely, an $\Fun$-vector space of dimension $n+1$ if $n\geq1$ and $0$ if $n<0$, but which yields, again, an infinite-dimensional $\Fun$-vector space $H_\hom^1(\P^1,\cO(n))$.
\end{rem}

\begin{rem} 
 As explained to the second author by Anton Deitmar, this does not contradict Theorem 2.7.1 in \cite{Deitmar11b}, which implies that the rank of the cohomology over $\Fun$ is at most the rank of the corresponding cohomology over $\Z$. The reason is that the base extension of the twisted sheaf $\cO(n)$ to $\Z$ (in the sense of \cite{Deitmar11b}) is not the twisted sheaf on the projective line over $\Z$, but a sheaf on $\P^1_\Z$ that is not of finite type. 

 To explain, the definition of the base extension of a sheaf $\cF$ on an $\Fun$-scheme $X$ to the associated scheme $X_\Z$ in \cite{Deitmar11b} is the pullback $\pi^\ast\cF$ along the base extension map $\pi: X_\Z\to X$, not tensored with the structure sheaf of $X$. This differs from the sheaf $\cF^+_\Z$ considered in this text.
\end{rem}


\bibliographystyle{plain}

\begin{thebibliography}{10}

\bibitem{BJRR10}
Ralph Blumenhagen, Benjamin Jurke, Thorsten Rahn and Helmut Roschy
\newblock Cohomology of line bundles: a computational algorithm.
\newblock {\em J. Math. Phys.}, 51(10), 2010.

\bibitem{CLS12}
Chenghao Chu, Oliver Lorscheid, and Rekha Santhanam.
\newblock Sheaves and {$K$}-theory for {$\mathbb F_1$}-schemes.
\newblock {\em Adv. Math.}, 229(4):2239--2286, 2012.

\bibitem{Connes-Consani10a}
Alain Connes and Caterina Consani.
\newblock Schemes over {$\mathbb F_1$} and zeta functions.
\newblock {\em Compos. Math.}, 146(6):1383--1415, 2010.

\bibitem{Deitmar05}
Anton Deitmar.
\newblock Schemes over {$\mathbb F\sb 1$}.
\newblock In {\em Number fields and function fields---two parallel worlds},
  volume 239 of {\em Progr. Math.}, pages 87--100. Birkh\"auser Boston, Boston,
  MA, 2005.

\bibitem{Deitmar08}
Anton Deitmar.
\newblock {$\mathbb F_1$}-schemes and toric varieties.
\newblock {\em Beitr\"age Algebra Geom.}, 49(2):517--525, 2008.

\bibitem{Deitmar11b}
Anton Deitmar.
\newblock Belian categories.
\newblock Far East J. Math. Sci. 70(1):1--46, 2012.

\bibitem{Jow11}
Shin-Yao Jow.
\newblock Cohomology of toric line bundles via simplicial Alexander duality.
\newblock {\em J. Math. Phys.}, 52(3), 2011.

\bibitem{Jun15}
Jaiung Jun.
\newblock {C}ech cohomology of semiring schemes.
\newblock Preprint, \href{http://arxiv.org/pdf/1503.01389}{arXiv:1503.01389}, 2015.

\bibitem{L13}
Oliver Lorscheid.
\newblock A blueprinted view on {$\mathbb{F}_1$}-geometry.
\newblock Preprint, \href{http://arxiv.org/pdf/1301.0083}{arXiv:1301.0083}, to appear in the ECM monograph \emph{Absolute Arithemetic and {$\mathbb{F}_1$}-geometry}, 2013.

\bibitem{L14}
Oliver Lorscheid.
\newblock Blueprints--towards absolute arithmetic?
\newblock J. Number Theory 144:408--421, 2014. 

\bibitem{L15}
Oliver Lorscheid.
\newblock Scheme theoretic tropicalization.
\newblock Preprint, \href{http://arxiv.org/pdf/1508.07949}{arXiv:1508.07949}, 2015.

\bibitem{Roschy-Rahn10}
Helmut Roschy and Thorsten Rahn.
\newblock Cohomology of line bundles: proof of the algorithm.
\newblock {\em J. Math. Phys.}, 51(10), 2010.

\bibitem{Toen-Vaquie09}
Bertrand To{\"e}n and Michel Vaqui{\'e}.
\newblock Au-dessous de {${\rm Spec}\,\mathbb Z$}.
\newblock {\em J. K-Theory}, 3(3):437--500, 2009.

\bibitem{Vezzani12}
Alberto Vezzani.
\newblock Deitmar's versus {T}o\"en-{V}aqui\'e's schemes over {$\mathbb{F}_1$}.
\newblock {\em Math. Z.}, 271(3-4):911--926, 2012.

\end{thebibliography}

\end{document}